\documentclass[10pt]{amsart}

\title[Commutative character sheaves and geometric types]{Commutative character sheaves and geometric types for supercuspidal representations}
\author{Clifton Cunningham}
\address{Department of Mathematics and Statistics, University of Calgary, 2500 University Drive Northwest, Calgary, Alberta, Canada, {T2N~1N4}.}
\email{cunning@math.ucalgary.ca}
\author{David Roe}
\address{Department of Mathematics, Massachusetts Institute of Technology, 77 Massachusetts Ave, Cambridge, MA, United States, 02139.}
\email{roed@mit.edu}

\subjclass[2010]{14F05 (primary), 14L15, 22E50}
\keywords{function-sheaf dictionary, commutative character sheaves, types for supercuspidal representations}

\usepackage{amssymb}
\usepackage{amsrefs}
\usepackage{multirow}
\usepackage{mathrsfs}
\usepackage{enumitem}
\usepackage{hyperref}
\hypersetup{
  colorlinks   = true, 
  urlcolor     = blue, 
  linkcolor    = blue, 
  citecolor   = green 
}

\usepackage{tikz}
\usetikzlibrary{shapes,arrows,calc,matrix}
\usepackage{tikz-cd}

\usepackage{amsmath}
\usepackage{lipsum}
\usepackage{setspace}

\theoremstyle{plain}
      \newtheorem{theorem}{Theorem}[section]
      \newtheorem*{theorem*}{Theorem}
      \newtheorem{proposition}[theorem]{Proposition}
      \newtheorem{lemma}[theorem]{Lemma}
      \newtheorem{corollary}[theorem]{Corollary}

      \theoremstyle{definition}
      \newtheorem{definition}[theorem]{Definition}

      \newtheorem{remark}[theorem]{Remark}


\newcommand{\FFF}{{\mathbf{F}_3}}
\newcommand{\ZZ}{{\mathbb{Z}}}
\newcommand{\NN}{{\mathbb{N}}}
\newcommand{\CC}{{\mathbb{C}}}
\newcommand{\QQ}{{\mathbb{Q}}}

\newcommand{\EE}{\mathbb{\bar Q}_\ell}

\newcommand{\bFq}{\bar{k}}
\newcommand{\Fq}{k}

\newcommand{\EEx}{\EE^\times}

\newcommand{\Weil}[1]{\mathcal{W}_{#1}}




\newcommand{\Frob}[1]{\operatorname{Fr}_{#1}}
\DeclareMathOperator{\Aut}{Aut}
\DeclareMathOperator{\Hom}{Hom}

\DeclareMathOperator{\Gr}{Gr}

\DeclareMathOperator{\id}{id}
\DeclareMathOperator{\Ext}{Ext}
\DeclareMathOperator{\Hh}{H}

\DeclareMathOperator{\trace}{Tr}

\DeclareMathOperator{\image}{im}
\DeclareMathOperator{\Loc}{Loc}

\DeclareMathOperator{\SL}{SL}
\DeclareMathOperator{\PGL}{PGL}
\DeclareMathOperator{\PSL}{PSL}

\newcommand{\Spec}[1]{{\operatorname{Spec}(#1)}}

\newcommand{\der}{_{\operatorname{der}}}
\newcommand{\ab}{_{\operatorname{ab}}}


\newcommand{\ceq}{{\, :=\, }}
\newcommand{\tq}{{\ \vert\ }}
\newcommand{\iso}{{\ \cong\ }}
\newcommand{\trFrob}[1]{t_{#1}}
\DeclareMathOperator{\Tr}{Tr}
\newcommand{\TrFrob}[1]{\Tr_{#1}}

\newcommand{\cs}[1]{{\mathcal{#1}}}
\newcommand{\gcs}[1]{{\mathcal{\bar #1}}}

\newcommand{\IC}{\mathcal{IC}}
\newcommand{\CS}{{\mathcal{C\hskip-0.8pt S}}}
\newcommand{\CCS}{{\mathcal{C\hskip-.8pt C\hskip-0.8pt S}}}

\newcommand{\CSiso}[1]{\CS(#1)_{/\text{iso}}}

\newcommand{\CCSiso}[1]{\CCS(#1)_{/\text{iso}}}
\makeatletter
\newcommand{\labitem}[2]{
\def\@itemlabel{\textbf{#1}}
\item
\def\@currentlabel{#1}\label{#2}}
\makeatother

\newcommand{\bm}{\bar{m}}
\newcommand{\bG}{\bar{G}}

\newcommand{\tight}[3]{\hspace{-#1pt}{#2}\hspace{-#3pt}}

\newcommand{\LxL}{\text{$\gcs{L} \tight{0}{\boxtimes}{0} \gcs{L}$}}

\newcommand{\red}{^{\operatorname{red}}}
\newcommand{\Sp}{{\operatorname{Sp}}}

\newcommand{\oK}{{\,^\circ \hskip-1pt K}}
\newcommand{\orho}{{\,^\circ \hskip-1pt \rho}}


\hyphenation{quasi-character}
\newcommand\numberthis{\addtocounter{equation}{1}\tag{\theequation}}

\usepackage{todonotes}

\begin{document}

\begin{abstract}
We show that some types for supercuspidal representations of tamely ramified $p$-adic groups that appear in Jiu-Kang Yu's work are geometrizable.
To do so, we define a function-sheaf dictionary for one-dimen\-sion\-al characters of arbitrary smooth group schemes over finite fields.  
In previous work we considered the case of commutative smooth group schemes and found that the standard definition of character sheaves produced a dictionary with a nontrivial kernel.  
In this paper we give a modification of the category of character sheaves that remedies this defect, and is also extensible to non-commutative groups.  
We then use these \emph{commutative character sheaves} to geometrize the linear characters that appear in the types introduced by Jiu-Kang Yu, assuming that the character vanishes on a certain derived subgroup.
To define \emph{geometric types}, we combine commutative character sheaves with Gurevich and Hadani's geometrization of the Weil representation and Lusztig's character sheaves.
\end{abstract}

\maketitle

\tableofcontents

\section*{Introduction}

As proved by Ju-Lee Kim in \cite{kim:07a}, all irreducible supercuspidal representations of tamely ramified $p$-adic groups for sufficiently large $p$ can be built from ``data'' introduced by Jiu-Kang Yu in \cite{yu:01a}*{\S 15}.
While the type, in the sense of Bushnell \& Kutzko \cite{bushnell-kutzko:98a}, of a supercuspidal representation built from Yu data can be constructed directly from the datum, it is convenient to consider an intermediate object, introduced in \cite{yu:01a}*{Remark 15.4}, which we call a \emph{Yu type datum}. 
Yu type data are studied in \cite{Yu:models}, which concludes with the following observation.
\begin{quotation}
{\it Therefore, up to some linear characters, all the ingredient representations 
 are on groups of the form $\underline{H}(\mathcal{O})$, where $\underline{H}$ is a smooth group scheme over [a Henselian discrete valuation ring with finite residue field $\kappa$] $\mathcal{O}$, and the representations are inflated from $\underline{H}(\kappa)$. These results suggest that algebraic geometry and group schemes should play an
important role in the representation theory of $p$-adic groups.}
\end{quotation}

In this paper we follow the suggestion above by showing that certain Yu type data are geometrizable, in the following sense.
A Yu type datum determines a sequence of representations $\orho_i$ of compact $p$-adic groups $\oK^i$, for $i=0, \ldots, d$, such that $(\oK^d, \rho_d)$ is a type for a supercuspidal representation of a $p$-adic group.
Let $R$ be the ring of integers of a local field with finite residue field $k$.
The main result of \cite{Yu:models} shows how to find, for each $i=0, \ldots, d$, a smooth group scheme $\underline{G}^i$ over the ring $R$ with $\underline{G}^i(R)=\,^\circ K^i $.
Under certain assumptions on the Yu type datum, we show how each representation $\orho_i$ can be replaced by a pair $(\underline{G}^i, \cs{F}^i)$, where $\cs{F}^i$ is a formal ${\bar\QQ}$-linear conbination of conjugation-equivariant sheaf complexes on the Greenberg transform $G^i$ of $\underline{G}^i$.
Writing $\trFrob{\cs{F}^i}$ for the function on $G^i(k) = \underline{G}^i(R) = \,^\circ K^i$ obtained by evaluating the trace of the action of Frobenius on $\cs{F}^i$, we show in Theorem~\ref{thm:geotypes} that 
\begin{equation}\label{eqn:intro1}
\trFrob{\cs{F}^i} = \trace(\,^\circ\rho_i).
\end{equation}
By this theorem, then, we obtain geometric avatars for each type in a Yu datum:
\[
\begin{tikzcd}
(\oK^i,\orho_i) \arrow[bend left, dashed]{rrr}{\text{geometrization}} &&& \arrow[bend left, dashed]{lll}{\text{trace of Frob}}
(\underline{G}^i, \cs{F}^i).
\end{tikzcd}
\]
We refer to the pair $(\underline{G}^d, \cs{F}^d)$ as a \emph{geometric type}.

To prove Theorem~\ref{thm:geotypes}, we must find a way to geometrize linear characters of groups of the form $\underline{H}(R)$, where $\underline{H}$ is a smooth group scheme over $R$. 
In order to do so in a systematic manner, we begin this paper by describing the function-sheaf dictionary for characters of arbitrary smooth group schemes over finite fields. 
When coupled with the Greenberg transform, this dictionary will allow for the geometrization of certain linear characters of $\underline{H}(R)$.

The function-sheaf dictionary over a finite field $k$ \cite{deligne:SGA4.5}*{Sommes trig.}
provides a way of encoding functions on the $k$-rational points of an algebraic group $G$
as $\ell$-adic local systems on $G$.  More specifically, if $G$ is a connected, commutative, algebraic group
then there is a certain category $\CS(G)$ of rank-one local systems on $G$ and an
explicit isomorphism between isomorphism classes
of objects in $\CS(G)$ and $G(k)^* \ceq \Hom(G(k), \EEx)$; 
the isomorphism is given by mapping $\cs{L}$ to the function
$\TrFrob{G} : g \mapsto \Tr(\Frob{} \vert \cs{L}_g)$.

In previous work \cite{cunningham-roe:13a}, we studied the function-sheaf dictionary for characters
smooth commutative group schemes $G$, allowing for non-connected groups.
We gave a description of the category $\CS(G)$ in this context, as well
as an epimorphism $\TrFrob{G} : \CSiso{G} \to G(k)^*$.
In contrast to the connected case, $\TrFrob{G}$ may have nontrivial kernel;
we gave an explicit description of its kernel as $\Hh^2(\pi_0(\bG), \EEx)^{\Frob{}}$ \cite{cunningham-roe:13a}*{Theorem 3.6}.  

In this paper we repair this defect in the function-sheaf dictionary
 by describing a full subcategory $\CCS(G)$ of $\CS(G)$ so that $\TrFrob{G}$ restricts to an isomorphism $\CCSiso{G} \to G(k)^*$.
We refer to objects of $\CS(G)$ as character sheaves and objects in $\CCS(G)$ as \emph{commutative character sheaves}, since the passage from $\CS(G)$ to $\CCS(G)$ involves a condition that exchanges the inputs to the multiplication morphism on $G$ (see Definition \ref{def:CCScom}).  
When $G$ is connected, all character sheaves on $G$ are commutative.

Category $\CCS(G)$ clarifies several questions about $\CS(G)$. 
Invisible character sheaves \cite{cunningham-roe:13a}*{Def. 2.8} are precisely those $\cs{L}$ with $\TrFrob{G}(\cs{L}) = 1$ that are not commutative.  Moreover, $\TrFrob{G}^{-1} : G(k)^* \to \CCSiso{G}$ provides a canonical splitting of $\TrFrob{G} : \CSiso{G} \to G(k)^*$ \cite{cunningham-roe:13a}*{Rem. 3.7}.

Next, we broaden our scope further to encompass smooth group schemes $G$ over $\Fq$ that are not necessarily commutative.
We assume $G$ is smooth, but not that it is connected, reductive or commutative. 
The category $\CS(G)$ has a straightforward generalization to this case.
We then define category $\CCS(G)$ for such $G$ and a forgetful functor to $\CS(G)$ so that $\TrFrob{G} : \CCSiso{G} \to G\ab(k)^*$
is an isomorphism.  
Since $G\ab(k)^*$ maps onto $G(k)^*$ with cokernel $G\der(k)^*$, it follows that for each character $\chi \in G(k)^*$ trivial on $G\der(k)$ there is a commutative
character sheaf $\cs{L}$ on $G$ with $\TrFrob{G}(\cs{L}) = \chi$. Moreover, we find that pullback along the quotient $q : G \to G\ab$
defines an equivalence of categories $\CCS(G\ab) \to \CCS(G)$.  
The functor $\CCS(G) \to \CS(G)$ is not
essentially surjective, missing the kinds of linear character sheaves highlighted by Kamgarpour in \cite{kamgarpour:09a}*{(1.1)}.

In order to provide further justification for referring to objects in $\CCS(G)$ as commutative character sheaves, suppose for the moment that $G$ is a connected, reductive algebraic group over $\Fq$.
Let $\gcs{L}$ be the geometric part of an object in $\CCS(G)$; see Section~\ref{sec:defs}.
Let $T$ be a maximal torus in $\bG$ and let $\gcs{L}_T$ be the restriction of $\gcs{L}$ to $T$.
Then the perverse sheaf $\gcs{L}[\dim G]$ appears in the semisimple complex $\operatorname{ind}_{B,T}^{\bG}(\gcs{L}_T)$ produced by parabolic induction.
It follows that every object in $\CCS(G)$ determines a Frobenius-stable character sheaf on $G$, in the sense of \cite{lusztig:85a}*{Def.~2.10}. 
Of course, the sheaves arising in this way represent a small part of Lusztig's geometrization of characters of representations of connected, reductive groups over finite fields, but they are precisely those needed to describe one-dimensional characters of such groups. 

Armed with the function-sheaf dictionary for smooth group schemes over finite fields, we return to the task of geometrizing Yu type data. 
The proof of Theorem~\ref{thm:geotypes} requires: Yu's work on smooth integral models \cite{Yu:models}; the geometrization of the character of the Heisenberg-Weil representation over finite fields by Gurevich \& Hadani \cite{gurevich-hadani:07a}; Lusztig's character sheaves on reductive groups over finite fields; and finally, the function-sheaf dictionary for characters of smooth group schemes over finite fields, now at our disposal in Theorem~\ref{thm:geo}.
In order to use these tools we must restrict our attention to Yu type data that satisfy two technical conditions, appearing in Section~\ref{ssec:geotypes} as Hypotheses~\ref{H0} and \ref{H1}. 
Granting these hypotheses, these pieces are assembled in Section~\ref{ssec:geotypes}, where we prove Theorem~\ref{thm:geotypes}.
With this theorem, we provide all of the ingredients needed to geometrize a class of supercuspidal representations of arbitrary depth. 
 
\bigskip

We now summarize the sections of the paper in more detail.
In Section \ref{sec:defs}, we recall the category $\CS(G)$ from \cite{cunningham-roe:13a} and note that it still makes sense when $G$ is not commutative.  
We focus on the case of commutative $G$ in Section \ref{sec:comcom},
giving the definition of a commutative character sheaf and proving our first main theorem, that
$\TrFrob{G} : \CSiso{G} \to G(k)^*$ induces an isomorphism on $\CCSiso{G}$.
Passing to the case that $G$ is non-commutative, we give the definition of and main results about commutative character sheaves in Section \ref{sec:noncom}.  
We note that we should only consider character sheaves that arise via pullback from $G\ab$ in order to eliminate those that have nontrivial restriction to the derived subgroup.  
This observation underlies the definition of commutative character sheaves for non-commutative $G$.  
We state our second main result, Theorem~\ref{thm:geo}, that pullback along the abelianization map defines an equivalence of categories $\CCS(G) \to \CCS(G\ab)$.
In Section \ref{ssec:obmor}, we use Galois cohomology to describe the relationship between $G(k)^*$ and $G\ab(k)^*$.  
We also compute the automorphism groups in $\CCS(G)$.
In Section~\ref{sec:types} we use Theorem~\ref{thm:geo} to geometrize types for certain supercuspidal representations of $p$-adic groups, in a sense made precise in Theorem~\ref{thm:geotypes}.
As preparation for the proof, we review some facts about the Heisenberg-Weil representation and its geometrization, in Section~\ref{ssec:Jacobi}.
Then, in Section~\ref{ssec:review}, we review Yu's theory of types and his study of smooth integral models.  
These elements are pulled together in Section~\ref{ssec:geotypes}, where the proof Theorem~\ref{thm:geotypes} is given.

\bigskip

We are extremely grateful to Loren Spice for explaining Yu's types for supercuspidal representations and Masoud Kamgarpour for helpful conversations.
We also thank Will Sawin and the anonymous referee, who pointed out errors in earlier versions of this paper and helped us clarify several points.

\section{Recollections and definitions} \label{sec:defs}

Let $G$ be a smooth group scheme over a finite field $\Fq$; that is, let $G$ be a group scheme over $\Fq$
for which the structure morphism $G \to \Spec{\Fq}$ is smooth in the sense of \cite{EGAIV4}*{Def 17.3.1}.
This implies $G \to \Spec{\Fq}$ is locally of finite type, but not that it is of finite type.
We remark that the identity component $G^0$ of $G$ is of finite type over $\Fq$, while the component group scheme
$\pi_0(G)$ of $G$ is an \'etale group scheme over $\Fq$, and both are smooth over $\Fq$.

In this paper we use a common formalism for Weil sheaves, writing $\cs{L}$ for the pair $(\gcs{L},\phi)$, where $\gcs{L}$ is an $\ell$-adic sheaf on $\bG \ceq G\otimes_{\Fq} \bFq$ and where $\phi : \Frob{}^*\gcs{L} \to \gcs{L}$ is an isomorphism of $\ell$-adic sheaves. 
We also follow convention by referring to $\cs{L}$  as a Weil sheaf on $G$, as in \cite{Deligne:Weil}*{D\'efinition 1.1.10}.
If $\cs{L}$ and $\cs{L}' \ceq (\gcs{L}', \phi')$ are Weil sheaves, we write $\alpha : \cs{L} \to \cs{L}'$ for a morphism $\alpha : \gcs{L} \to \gcs{L}$ such that 
\[
\begin{tikzcd}
\Frob{}^* \gcs{L} \arrow{d}[swap]{\phi} \arrow{r}{\Frob{}^*\alpha} &  \Frob{}^* \gcs{L} \arrow{d}{\phi'}\\
\gcs{L} \arrow{r}{\alpha} & \gcs{L}
\end{tikzcd}
\]
commutes.  
These conventions simplify notation considerably, but they were not employed in \cite{cunningham-roe:13a}.

We write $m : G \times G \to G$ for the multiplication morphism, and $G(k)^*$ for $\Hom(G(k), \EEx)$.
Define $\theta : G\times G \to G\times G$ by $\theta(g,h) = (h,g)$.

When $G$ is commutative, a \emph{character sheaf} on $G$ is a triple $(\gcs{L}, \mu, \phi)$,
where $\gcs{L}$ is a rank-one $\ell$-adic local system on $\bG$, 
$\mu : \bm^* \gcs{L} \to \LxL$ is an isomorphism
of sheaves on $\bG \times \bG$, 
and $\phi : \Frob{G}^* \gcs{L} \to \gcs{L}$ is an isomorphism of sheaves on $\bG$;
the triple $(\gcs{L}, \mu, \phi)$ is required to satisfy certain conditions \cite{cunningham-roe:13a}*{Def. 1.1}.
Write $\CS(G)$ for the category of character sheaves on $G$.

Even when $G$ is not commutative, the category $\CS(G)$, defined as in \cite{cunningham-roe:13a}*{Def. 1.1},
still makes sense.  In order to distinguish the resulting objects from
the character sheaves of Lusztig, we will refer to the former as \emph{linear character sheaves}
(to evoke the one-dimensional character sheaves of \cite{kamgarpour:09a}).

\section{Commutative character sheaves on commutative groups}\label{sec:comcom}

We consider first the case that $G$ is commutative, which we will later apply to the case of general smooth $G$.
Let $\cs{L}$ be a character sheaf on $G$.  Since $m = m \circ \theta$ in this case,
there is a canonical isomorphism $\xi : m^* \cs{L} \to \theta^* m^* \cs{L}$.
There is also an isomorphism $\vartheta : \cs{L}\boxtimes\cs{L} \to \theta^*(\cs{L}\boxtimes\cs{L})$
given on stalks by the canonical map $\gcs{L}_{g} \otimes \gcs{L}_{h} \to \gcs{L}_{h} \otimes \gcs{L}_{g}$.

\begin{definition}\label{def:CCScom}
A character sheaf $(\cs{L}, \mu)$ on a smooth commutative group scheme $G$ is \emph{commutative}
if the following diagram of Weil sheaves on $G \times G$ commutes.
  \[
  \begin{tikzcd}[row sep=30]
   m^*\cs{L} \arrow{d}[swap]{\xi}{m= m\circ\theta} \arrow{r}{\mu} & \cs{L}\boxtimes\cs{L} \arrow{d}{\vartheta}\\
   \theta^*(m^*\cs{L}) \arrow{r}{{\theta}^*\mu} &  \theta^*(\cs{L}\boxtimes\cs{L})
  \end{tikzcd}
  \]
We write $\CCS(G)$ for the full subcategory of $\CS(G)$ consisting of commutative character sheaves.
 \end{definition}

In \cite{cunningham-roe:13a}*{Theorem 3.6}, we showed that $\TrFrob{G} : \CSiso{G} \to G(k)^*$ is surjective and
explicitly computed its kernel.  In this section, we show that the corresponding map
$\TrFrob{G} : \CCSiso{G} \to G(k)^*$ for commutative character sheaves is an isomorphism.
We begin by reinterpreting Definition \ref{def:CCScom} in terms of cocycles.

Let $G$ be a commutative \'etale group scheme over $k$. For a character sheaf $\cs{L}$ on $G$, recall
\cite{cunningham-roe:13a}*{\S 2.3} that $S_G : \CSiso{G} \to \Hh^2(E_G^\bullet)$ is an isomorphism mapping
$[\cs{L}]$ to $[\alpha \oplus \beta]$, where $E_G^\bullet$ is the total space of the zeroth page
of the Hochschild-Serre spectral sequence, $\alpha \in \oK^0(\Weil{}, \oK^2(\bG, \EEx))$ is obtained from $\mu$ and
$\beta \in \oK^1(\Weil{}, \oK^1(\bG, \EEx))$ is obtained from $\phi$.

Let $a \in Z^2(\bG, \EEx)$ correspond to $\alpha$.  We say that $[\alpha \oplus \beta] \in \Hh^2(E_G^\bullet)$
is \emph{symmetric} if $a(x,y) = a(y,x)$ for all $x,y \in \bG$.  This condition is well defined, since every
coboundary in $B^2(\bG, \EEx)$ is symmetric.  The connection between commutative character sheaves
and symmetric classes is given in the following lemma.

\begin{lemma} \label{lem:symccslink}
Suppose $G$ is a smooth commutative group scheme, and let $\cs{L}$ be a character sheaf on $G$.
Then $\cs{L}$ is commutative if and only if $S_G(\cs{L})$ is symmetric.
\end{lemma}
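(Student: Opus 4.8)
The plan is to unwind both sides of the claimed equivalence into the same concrete statement about the 2-cocycle $a \in Z^2(\bG, \EEx)$ attached to $\mu$. First I would recall precisely how the isomorphism $\mu : \bm^*\gcs{L} \to \gcs{L}\boxtimes\gcs{L}$ gives rise to the cocycle $a$: on stalks, $\mu$ furnishes for each pair $(x,y) \in \bG \times \bG$ an isomorphism $\gcs{L}_{xy} \to \gcs{L}_x \otimes \gcs{L}_y$, and after trivializing $\gcs{L}$ stalkwise (which one may do since it is a rank-one local system on the étale, hence profinite-discrete, group $\bG$), this isomorphism is recorded by a scalar $a(x,y) \in \EEx$; the cocycle condition $a(x,y)a(xy,z) = a(y,z)a(x,yz)$ is exactly the associativity constraint on $\mu$ from \cite{cunningham-roe:13a}*{Def. 1.1}. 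The key point, already flagged in the paragraph before the lemma, is that any rescaling of the stalkwise trivializations changes $a$ only by a coboundary in $B^2(\bG,\EEx)$, and every such coboundary is symmetric; so the symmetry condition ``$a(x,y) = a(y,x)$ for all $x,y$'' depends only on the class $S_G(\cs{L}) = [\alpha \oplus \beta]$ and is what is meant by calling that class symmetric.

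Next I would translate the commutativity diagram of Definition \ref{def:CCScom} into this language. The left vertical map $\xi : m^*\cs{L} \to \theta^*m^*\cs{L}$ is the canonical identification coming from $m = m \circ \theta$, so on the stalk over $(x,y)$ it is the identity map $\gcs{L}_{xy} = \gcs{L}_{yx}$. The right vertical map $\vartheta : \cs{L}\boxtimes\cs{L} \to \theta^*(\cs{L}\boxtimes\cs{L})$ is, by construction, the stalkwise flip $\gcs{L}_x \otimes \gcs{L}_y \to \gcs{L}_y \otimes \gcs{L}_x$, $u \otimes v \mapsto v \otimes u$. Chasing the stalk over $(x,y)$ around the square: going right-then-down sends a trivializing section to $a(x,y)$ in $\gcs{L}_x \otimes \gcs{L}_y$ and then flips it to $a(x,y)$ times the swapped basis vector of $\gcs{L}_y \otimes \gcs{L}_x$; going down-then-right applies $\xi$ (the identity) and then $\theta^*\mu$, which over $(x,y)$ is the component of $\mu$ over $(y,x)$, giving $a(y,x)$ times that same swapped basis vector. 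Hence the diagram commutes on the stalk over $(x,y)$ precisely when $a(x,y) = a(y,x)$, and commutativity of the whole diagram of Weil sheaves is the conjunction of these scalar identities over all $(x,y)$ — i.e.\ $\cs{L}$ is commutative iff $a$ is a symmetric cocycle, which by the previous paragraph is iff $S_G(\cs{L})$ is symmetric.

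A few wrinkles need care. One is that Definition \ref{def:CCScom} is a diagram of \emph{Weil} sheaves, so a priori one must also check compatibility with the Frobenius structures $\phi$ (the $\beta$ part of the class); but the maps $\xi$ and $\vartheta$ are the canonical/natural ones and are automatically Frobenius-equivariant, so the Weil-sheaf diagram commutes iff the underlying diagram of $\ell$-adic sheaves on $\bG \times \bG$ does — the condition genuinely only involves $\alpha$, consistent with the fact that ``symmetric'' is phrased purely in terms of $a$. A second is the reduction from a general smooth commutative group scheme $G$ to the étale group scheme used when invoking $S_G$: the character-sheaf structure on $G$ is detected on $\pi_0(G)$ in the relevant way (as set up in \cite{cunningham-roe:13a}), and both the commutativity diagram and the symmetry condition are insensitive to the connected part, so one may pass to $\pi_0(\bG)$; I would cite the relevant normalization from \cite{cunningham-roe:13a}*{\S 2.3} rather than redo it. The main obstacle is bookkeeping rather than conceptual: one must pin down the stalkwise normalization of $\mu$ so that the scalar $a(x,y)$ is unambiguous, verify that $\vartheta$ really is the naive flip (not the flip twisted by some sign or Tate twist), and confirm that $\theta^*\mu$ evaluated over $(x,y)$ is literally the $(y,x)$-component of $\mu$ — once these identifications are fixed, the stalk chase is immediate and the equivalence drops out.
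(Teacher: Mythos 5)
Your proposal is correct and follows essentially the same route as the paper, whose entire proof is the one-line observation that symmetry of $S_G(\cs{L})$ follows from the commutativity of the diagram in Definition~\ref{def:CCScom} after choosing bases for each stalk. Your stalkwise chase (identifying $\xi$ with the canonical identification, $\vartheta$ with the flip, and $\theta^*\mu$ at $(x,y)$ with $\mu$ at $(y,x)$, plus the remark that coboundaries are symmetric) is just the detailed execution of that same argument.
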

\begin{proof}
The symmetry of $S_G(\cs{L})$ is a direct consequence of the commutativity of the diagram in Definition \ref{def:CCScom}
after choosing bases for each stalk.
\end{proof}

We may similarly define a symmetric class in $\Hh^2(\bG, \EEx)$ to be one represented by a symmetric $2$-cocycle.
The following lemma will allow us to show that there are no invisible commutative character sheaves.

\begin{lemma} \label{lem:symtriv}
Let $\bG$ be a commutative group.  Then the only symmetric class in $\Hh^2(\bG, \EEx)$ is the trivial class.
\end{lemma}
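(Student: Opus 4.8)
The claim is that a symmetric $2$-cocycle $a : \bar G \times \bar G \to \bar{\mathbb Q}_\ell^\times$ on an abelian group $\bar G$ (valued in the divisible abelian group $\bar{\mathbb Q}_\ell^\times$, with trivial action) is automatically a coboundary. The plan is to exhibit a splitting of the corresponding central extension. Given the cocycle $a$, form the central extension $1 \to \bar{\mathbb Q}_\ell^\times \to E \to \bar G \to 1$ whose underlying set is $\bar{\mathbb Q}_\ell^\times \times \bar G$ with multiplication twisted by $a$. The hypothesis that $a$ is symmetric means exactly that $E$ is abelian: for $(s,x),(t,y) \in E$ one computes $(s,x)(t,y) = (st\, a(x,y), xy)$ and $(t,y)(s,x) = (st\, a(y,x), yx)$, and these agree precisely because $\bar G$ is abelian and $a(x,y)=a(y,x)$. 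So a symmetric class is the same thing as an element of $\Ext^1_{\ZZ}(\bar G, \bar{\mathbb Q}_\ell^\times)$ in the category of abelian groups. The task reduces to showing this $\Ext$ group vanishes, i.e. that every abelian extension of $\bar G$ by $\bar{\mathbb Q}_\ell^\times$ splits.

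The key point is that $\bar{\mathbb Q}_\ell^\times$ is a divisible abelian group, hence an injective object in the category of abelian groups (it is a $\ZZ$-module, and divisible $\ZZ$-modules are exactly the injective ones). Therefore $\Ext^1_{\ZZ}(M, \bar{\mathbb Q}_\ell^\times) = 0$ for every abelian group $M$, in particular for $M = \bar G$. Concretely, one can make this explicit without invoking injectivity as a black box: lift a set of generators of $\bar G$ to $E$; for a generator $x$ of finite order $n$, its chosen lift $\tilde x$ satisfies $\tilde x^{\,n} = c \in \bar{\mathbb Q}_\ell^\times$, and since $\bar{\mathbb Q}_\ell^\times$ is divisible we may pick an $n$-th root $c^{1/n}$ and replace $\tilde x$ by $\tilde x \cdot c^{-1/n}$, which now has order $n$; the abelianness of $E$ guarantees these corrected lifts satisfy all the defining relations of $\bar G$, so they assemble into a splitting homomorphism $\bar G \to E$. (For the free part no correction is needed.) A splitting of the extension is the same as a trivialization of the cocycle $a$, i.e. $a \in B^2(\bar G, \bar{\mathbb Q}_\ell^\times)$.

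One should be slightly careful about the size of $\bar G$: in the intended application $\bar G = \pi_0(\bar G')$ is finite, so the generator-by-generator argument above is completely elementary and there is nothing to worry about. If one wants the statement for arbitrary abelian $\bar G$, the cleanest route is simply to cite injectivity of divisible groups, which handles the general case uniformly. I expect the only mild obstacle to be bookkeeping: checking that "symmetric cocycle" translates exactly to "abelian central extension," and that a set-theoretic splitting normalized as above is genuinely a group homomorphism — both follow from a direct computation with the twisted multiplication, using commutativity of $\bar G$ at each step. No deeper input is needed.
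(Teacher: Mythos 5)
Your proof is correct, but it reaches the key group $\Ext^1_\ZZ(\bG,\EEx)$ by a different route than the paper. The paper invokes the universal coefficient theorem together with the identifications $\Hh_1(\bG,\ZZ)\cong\bG$ and $\Hh_2(\bG,\ZZ)\cong\wedge^2\bG$, observes that the map $\Hh^2(\bG,\EEx)\to\Hom(\wedge^2\bG,\EEx)$ sends a cocycle $f$ to the alternating form $f(x,y)/f(y,x)$, and concludes that symmetric classes lie in the image of $\Ext^1_\ZZ(\bG,\EEx)$, which vanishes by divisibility of $\EEx$. You instead use the classical dictionary between $\Hh^2$ and central extensions: a symmetric cocycle is exactly one whose associated extension $1\to\EEx\to E\to\bG\to 1$ is abelian, i.e.\ an element of $\Ext^1_\ZZ(\bG,\EEx)$, and this vanishes because $\EEx$ is divisible, hence injective as a $\ZZ$-module; the resulting homomorphic section trivializes the cocycle. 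Both arguments ultimately rest on the same vanishing, but yours is more elementary and self-contained (no universal coefficients, no computation of $\Hh_2$ of an abelian group), while the paper's version yields slightly more information as a by-product, namely that the quotient of $\Hh^2(\bG,\EEx)$ by the symmetric classes is the group of alternating forms $\Hom(\wedge^2\bG,\EEx)$, which is the natural home of the non-commutative (invisible) classes discussed elsewhere in the paper. One small caveat: your remark that in the intended application $\bG$ is finite is not quite accurate --- the lemma is applied to $\bG$ arising from \'etale group schemes that are only locally of finite type, so $\bG$ may be infinite --- but since you also give the general argument via injectivity of divisible groups (rather than only the generator-by-generator splitting), this does not affect the validity of your proof.
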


\begin{proof}
By the universal coefficient theorem,
\[
0 \to \Ext^1_\ZZ(\Hh_{n-1}(\bG, \ZZ), \EEx) \to \Hh^n(\bG, \EEx) \to \Hom(\Hh_n(\bG, \ZZ), \EEx) \to 0
\]
is exact for all $n > 0$.  When $n = 2$, using the fact that $\bG$ is commutative, we have that $\Hh_1(\bG, \ZZ) \cong \bG$
and that $\Hh_2(\bG, \ZZ) \cong \wedge^2 \bG$. We get
\[
0 \to \Ext^1_\ZZ(\bG, \EEx) \to \Hh^2(\bG, \EEx) \to \Hom(\wedge^2 \bG, \EEx) \to 0.
\]
The map $\Hh^2(\bG, \EEx) \to \Hom(\wedge^2 \bG, \EEx)$ maps a $2$-cocycle $f$ to the alternating function
\[
(x,y) \mapsto \frac{f(x,y)}{f(y,x)}.
\]
Thus the cohomology classes represented by symmetric cocycles are precisely those in the image of $\Ext^1_\ZZ(\bG, \EEx)$.
But $\Ext^1_\ZZ(-, \EEx)$ vanishes because $\EEx$ is divisible.
\end{proof}

\begin{lemma} \label{lem:conncomm}
If $G$ is a connected commutative algebraic group over $\Fq$ then every character sheaf on $G$ is commutative.
\end{lemma}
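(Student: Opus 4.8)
The goal is to show that if $G$ is a connected commutative algebraic group over $\FF_q$, then every character sheaf $(\cs{L},\mu,\phi)$ on $G$ is commutative in the sense of Definition~\ref{def:CCScom}. By Lemma~\ref{lem:symccslink}, it suffices to show that $S_G(\cs{L})$ is symmetric, i.e. that the $2$-cocycle $a \in Z^2(\bG,\EEx)$ attached to $\mu$ can be taken symmetric. So the task reduces to a purely algebraic statement about $\Hh^2(\bG,\EEx)$ when $\bG$ is the group of $\bar k$-points of a connected commutative algebraic group — except that here $\bG$ need not be a finite group, unlike in Lemma~\ref{lem:symtriv}.

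**Main steps.** First I would recall from \cite{cunningham-roe:13a} the structure of $\CS(G)$ for connected commutative $G$: the forgetful map sending $(\cs{L},\mu,\phi)$ to the underlying geometric local system $\gcs{L}$, together with the fact that for connected commutative $G$ the isomorphism $\mu$ (the multiplicativity datum) is unique up to the automorphisms of $\gcs{L}$, and the relevant obstruction/ambiguity is governed by $\Hh^2$ of the abstract group $\bG = G(\bar k)$ with divisible coefficients. The key point is that for a connected commutative algebraic group $G$ over an algebraically closed field, the group $\bG = G(\bar k)$ is a \emph{divisible} abelian group. I would then invoke the standard fact that for a divisible abelian group $\bG$, every class in $\Hh^2(\bG,\EEx)$ is represented by a symmetric cocycle; equivalently, by the same universal-coefficients argument as in Lemma~\ref{lem:symtriv}, the obstruction to symmetry lives in $\Hom(\wedge^2\bG,\EEx)$, and one checks this group receives the class of $\gcs{L}$ as zero, or more directly that the multiplicative local system $\gcs{L}$ on a connected commutative $G$ already forces $\mu$ to be ``commutative'' because $m$ and $m\circ\theta$ agree and $\gcs{L}$ has no nontrivial automorphisms compatible with the constraints. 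Concretely: choosing a trivialization of the stalks, the cocycle $a$ measures the failure of $\mu$ to be strictly associative-symmetric, and since $G$ is connected the function $(x,y)\mapsto a(x,y)/a(y,x)$ is a continuous (in the appropriate sense) bihomomorphism $\bG\times\bG \to \EEx$ that is alternating; such a map from a connected (hence divisible, and with no nontrivial finite quotients obstructing) group must be trivial. Replacing $a$ by the symmetric cocycle $\sqrt{a(x,y)a(y,x)}$ (using divisibility of $\EEx$ to take square roots coherently, exactly as in the proof of Lemma~\ref{lem:symtriv}) then shows $S_G(\cs{L})$ is symmetric.

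**Expected obstacle.** The subtlety is that $\bG$ is infinite, so I cannot literally cite Lemma~\ref{lem:symtriv} (which assumes $\bG$ finite and in any case only handles the trivial class). The substantive content is pinning down the topology/finiteness constraints in \cite{cunningham-roe:13a}'s definition of $\CS(G)$ that restrict which cocycles $a$ can occur — presumably the local system $\gcs{L}$ being of finite order, or the multiplicativity datum being required to be compatible with the pro-algebraic structure — and showing that under those constraints the antisymmetric part $a(x,y)/a(y,x)$ must vanish because it would otherwise define a nontrivial alternating pairing incompatible with connectedness of $G$. I expect this is where one has to be careful and cite the precise setup of \cite{cunningham-roe:13a}*{\S 1--2}; once that is in hand, the symmetrization step and the appeal to Lemma~\ref{lem:symccslink} are routine.
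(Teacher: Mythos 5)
The key idea your proposal is missing is the Lang isogeny, which is precisely where connectedness enters in the paper's argument: by Lang's theorem, the map $g \mapsto \Frob{}(g)g^{-1}$ is a surjective isogeny with kernel $G(\Fq)$, so every rank-one character sheaf on a connected $G$ is obtained by pushing forward this covering along a character $\chi : G(\Fq) \to \EEx$. The resulting extension of $\bG$ by $\EEx$ is a pushout of the extension $1 \to G(\Fq) \to \bG \to \bG \to 1$, whose total group $\bG$ is commutative; hence the pushed-out extension is commutative and the representing $2$-cocycle $a$ satisfies $a(x,y)=a(y,x)$ on the nose. That is the whole proof, after the reduction via Lemma~\ref{lem:symccslink} (which you do correctly identify). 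Your substitute for this geometric input does not work: the claim that $\bG = G(\bFq)$ is divisible is false in characteristic $p$ whenever $G$ has a unipotent part (e.g.\ $\mathbb{G}_a(\bFq)$ is killed by $p$), and unipotent groups are exactly the ones that matter in this paper, since the groups arising from Greenberg transforms of smooth models are largely unipotent. Moreover, even the ``standard fact'' you invoke is not available in the form you need: for such $\bG$ the group $\Hom(\wedge^2\bG,\EEx)$ is nonzero (an infinite $\FF_p$-vector space admits plenty of nontrivial alternating pairings into $\mu_p \subset \EEx$), so no argument using only the abstract group cohomology of $\bG$ can show the class attached to $\cs{L}$ is symmetric; one must use which classes actually arise from character sheaves, i.e.\ the Lang-isogeny description.

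Two further points. Your fallback argument --- that $m = m\circ\theta$ and rigidity of $\gcs{L}$ ``force'' $\mu$ to be commutative, or that the antisymmetrization $(x,y)\mapsto a(x,y)/a(y,x)$ is a ``continuous'' bihomomorphism killed by connectedness --- is not an argument: exactly the same reasoning would apply to disconnected commutative $G$, where the conclusion fails (the invisible character sheaves are non-commutative), so connectedness must be used through a genuine input such as Lang's theorem, and no topology on the abstract group $\bG$ is in play. Likewise, replacing $a$ by $\sqrt{a(x,y)a(y,x)}$ presupposes that the alternating part is cohomologically trivial, which is the statement to be proved, so that step is circular. (A small misreading: Lemma~\ref{lem:symtriv} does not assume $\bG$ finite; it holds for arbitrary commutative $\bG$, but as you note it only says symmetric classes are trivial, which is the converse of what is needed here.)
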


\begin{proof}
Suppose $S_G(\cs{L}) = [\alpha\oplus \beta]\in \Hh^2(E_G^\bullet)$.
We can use \'etale descent to see that pullback by the Lang isogeny defines an equivalence
of categories between local systems on $G$ and $G(\Fq)$-equivariant local systems on $G$.  
Thus every character sheaf $\cs{L}$ on $G$ arises through the Lang isogeny, together with a character $G(\Fq) \to \EEx$.
Pushing forward the Lang isogeny along this character defines an extension of $\bG$ by $\EEx$ whose class is fixed by Frobenius; let $a\in Z^2(\bG, \EEx)$ be a representative $2$-cocycle.
Then $a$ corresponds to the $\alpha \in \oK^0(\Weil{}, \oK^2(\bG, \EEx))$, above. 
Since the covering group of the Lang isogeny is $G(k)$, which is commutative, the class of this extension satisfies $a(x,y) = a(y,x)$ for all $x,y \in \bG$. 
This shows that $S_{G}(\cs{L})$ is symmetric.
It follows from Lemma~\ref{lem:symccslink} that $\cs{L}$ is a commutative character sheaf.
\end{proof}

\begin{theorem} \label{thm:trfrobiso}
If $G$ is a smooth commutative group scheme over $\Fq$ then $\TrFrob{G} : \CCSiso{G} \to G(\Fq)^*$ is an isomorphism.
\end{theorem}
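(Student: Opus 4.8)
The plan is to reduce to the case that $G$ is étale and there to argue with the cocycle description $S_G$ recalled above, together with Lemmas~\ref{lem:symccslink}, \ref{lem:symtriv} and \ref{lem:conncomm}. Since tensoring character sheaves multiplies their associated $2$-cocycles and a product of symmetric cocycles is symmetric, Lemma~\ref{lem:symccslink} shows $\CCSiso{G}$ is a subgroup of $\CSiso{G}$ and $\TrFrob{G}$ restricts to a group homomorphism on it; so it suffices to prove this restriction is injective and surjective. For the reduction, write $1 \to G^0 \to G \to \pi_0(G) \to 1$: on the connected group $G^0$ every character sheaf is commutative (Lemma~\ref{lem:conncomm}) and $\TrFrob{G^0}$ is the classical isomorphism, while $\pi_0(G)$ is étale. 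Using the structural exact sequences of \cite{cunningham-roe:13a} relating $\CS(G)$, $\CS(G^0)$ and $\CS(\pi_0(G))$ together with the short exact sequence $1 \to \pi_0(G)(\Fq)^* \to G(\Fq)^* \to G^0(\Fq)^* \to 1$ (right-exact by Lang's theorem and divisibility of $\EEx$), one reduces both injectivity and surjectivity to the étale case. For injectivity this reduction is clean: a commutative character sheaf in the kernel of $\TrFrob{G}$ restricts trivially to $G^0$, hence is pulled back from $\pi_0(G)$, where it is again commutative and in the kernel of $\TrFrob{\pi_0(G)}$.

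So suppose $G$ is étale, so that $\bG$ is an abelian group with Frobenius action and $G(\Fq) = \bG^{\Frob{}}$, and consider injectivity. Let $\cs{L}$ be a commutative character sheaf with $\TrFrob{G}(\cs{L}) = 1$, and write $S_G(\cs{L}) = [\alpha \oplus \beta]$ with attached $2$-cocycle $a \in Z^2(\bG, \EEx)$. By Lemma~\ref{lem:symccslink} the cocycle $a$ is symmetric, so by Lemma~\ref{lem:symtriv} it is a coboundary; modifying the representative of $[\alpha \oplus \beta]$ by the total coboundary of a $1$-cochain trivializing $a$, we may assume $a$ is trivial. The remaining cocycle condition in $E_G^\bullet$ then forces the modified $\beta$ to be given by a homomorphism $\beta_{\Frob{}} \colon \bG \to \EEx$, and in this normalization $\TrFrob{G}(\cs{L})$ is the restriction $\beta_{\Frob{}}|_{\bG^{\Frob{}}}$. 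Since this restriction is trivial and, by Pontryagin duality for the finite Frobenius-stable subquotients of $\bG$, the characters of $\bG$ vanishing on $\bG^{\Frob{}}$ are exactly those in $(\Frob{} - 1)\Hom(\bG, \EEx)$, we may write $\beta_{\Frob{}} = (\Frob{} - 1)\delta$ for a homomorphism $\delta$. Then $[0 \oplus \beta]$ is the total coboundary of $\delta$, so $S_G(\cs{L})$ is trivial and $\cs{L}$ is the trivial character sheaf; hence $\TrFrob{G}$ is injective on $\CCSiso{G}$.

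For surjectivity, still with $G$ étale, let $\chi \in G(\Fq)^* = (\bG^{\Frob{}})^*$ and choose, using the divisibility of $\EEx$, a homomorphism $\tilde\chi \colon \bG \to \EEx$ restricting to $\chi$ on $\bG^{\Frob{}}$. Since $\tilde\chi$ is a homomorphism, $[0 \oplus \tilde\chi]$ is a cocycle of $E_G^\bullet$, and $\cs{L} := S_G^{-1}([0 \oplus \tilde\chi])$ is a character sheaf whose attached $2$-cocycle is trivial, hence symmetric, so $\cs{L}$ is commutative by Lemma~\ref{lem:symccslink}; and by the description of $\TrFrob{G}$ used above, $\TrFrob{G}(\cs{L}) = \tilde\chi|_{\bG^{\Frob{}}} = \chi$. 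Thus $\TrFrob{G} \colon \CCSiso{G} \to G(\Fq)^*$ is a bijective group homomorphism, hence an isomorphism.

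The step I expect to be the main obstacle is the dévissage to the étale case, and in particular its surjectivity half: one must check that the non-commutativity of an arbitrary character sheaf on $G$ is ``supported on $\pi_0(G)$'', so that it can be corrected by pullbacks of character sheaves on $\pi_0(G)$, and that the kernel and cokernel exact sequences of \cite{cunningham-roe:13a} fit into a diagram compatible with $\TrFrob{}$ to which a five-lemma argument applies. Within the étale case itself the only nonformal ingredient is the Pontryagin-duality identification of the characters of $\bG$ killing $\bG^{\Frob{}}$; the remainder is the routine-but-delicate bookkeeping of how a coboundary in the $\bG$-direction perturbs the Weil-direction component inside the total complex $E_G^\bullet$.
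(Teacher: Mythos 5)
Your proof is correct and follows essentially the same route as the paper: reduce to the étale case via Lemma~\ref{lem:conncomm} and a snake/five-lemma dévissage, then work with the cocycle description $S_G$, using Lemmas~\ref{lem:symccslink} and \ref{lem:symtriv} for injectivity and a lift with trivial $\alpha$-component for surjectivity. The only (harmless) internal variation is in the étale injectivity step: the paper first invokes the kernel identification of \cite{cunningham-roe:13a}*{Prop.~2.7} to replace the class by one of the form $[\alpha'\oplus 0]$ and then kills $\alpha'$ by symmetry, whereas you first kill $\alpha$ by symmetry and then verify directly that the residual class $[0\oplus\beta]$ with trivial trace of Frobenius is a total coboundary.
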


\begin{proof}
Suppose first that $G$ is \'etale.  Consider the isomorphism of short exact sequences
\[
\begin{tikzcd}
 0 \arrow{r} & \ker \TrFrob{G} \arrow{d} \arrow{r} & \CSiso{G}\arrow{d}{S_G} \arrow{r}{\TrFrob{G}} \arrow{r} & G(\Fq)^* \arrow{d} \arrow{r} & 0\\
  0 \arrow{r} & \Hh^0(\Weil{},\Hh^2(\bG,\EEx)) \arrow{r} & \Hh^2(E^\bullet_G) \arrow{r} & \Hh^1(\Weil{},\Hh^1(\bG,\EEx)) \arrow{r} & 0
 \end{tikzcd}
 \]
from \cite{cunningham-roe:13a}*{Prop. 2.7}.

Suppose that $\cs{L}$ is a commutative character sheaf with $\trFrob{\cs{L}} = 1$, and set $[\alpha, \beta] = S_G([\cs{L}])$.
Then $S_G([\cs{L}])$ is in the image of $\Hh^2(\bG, \EEx)^\Weil{}$, so is cohomologous to
$[\alpha', 0]$.  Since $\alpha$ is symmetric and coboundaries are symmetric, $\alpha'$ is symmetric as well.
So by Lemma \ref{lem:symtriv}, $\alpha'$ is cohomologically trivial, and thus $[\cs{L}]$ is trivial as well.

To see that $\TrFrob{G}$ is still surjective on $\CCSiso{G}$, note that the character sheaf constructed in the proof of
\cite{cunningham-roe:13a}*{Prop. 2.6} has trivial $\alpha$, and is thus commutative.

For general smooth commutative group schemes, we use Lemma \ref{lem:conncomm} and the snake lemma, as in the proof of
\cite{cunningham-roe:13a}*{Theorem 3.6}
\end{proof}

\begin{remark}
Since $\Hh^0(\Weil{},\Hh^2(\bG,\EEx))$ is not necessarily trivial \cite{cunningham-roe:13a}*{Ex. 2.10}, the functor
$\CCS(G) \to \CS(G)$ is not necessarily essentially surjective.  Indeed, the invisible character sheaves \cite{cunningham-roe:13a}*{Def. 2.8}
defined in our previous paper are precisely those non-commutative character sheaves with trivial trace of Frobenius.
\end{remark}

\section{Commutative character sheaves on non-commutative groups}\label{sec:noncom}

We now consider the case of a smooth group scheme without the commutativity assumption.  Our approach
is to relate linear character sheaves on $G$ to character sheaves on its abelianization $G\ab = G / G\der$, where $G\der$ is defined by \cite{demazure:SGA3-VIB}*{D\'efinition 7.2.2} and the quotient is $G\ab$ is an instance of  \cite{demazure:SGA3-VIA}*{Section 7.2.2}.
While every character $\chi \in G(k)^*$ vanishes on $G(k)\der$, it may not vanish on $G\der(k)$.  For example,
if $k$ has odd characteristic then there are nontrivial characters $\PGL_2(k) \to \EEx$ vanishing on
$\PGL_2(k)\der = \PSL_2(k)$ (see Section \ref{sec:badchar}).  In passing to $G\ab$,
we may only hope to geometrize characters that vanish on all of $G\der(k)$.

We begin this section with the main definition in this paper - the category $\CCS(G)$ of commutative character sheaves, Definition~\ref{def:CCS}. 
This definition is delicate and somewhat technical, but it is vindicated in Theorem~\ref{thm:Gab} which shows that $\CCS(G)$ is equivalent to the category of commutative character sheaves on the abelianization $G\ab$ of $G$. 
To prove Theorem~\ref{thm:Gab} we use descent theory in Section~\ref{ssec:descent}, in the process giving insight into Definition~\ref{def:CCS}. 
Section~\ref{sec:noncom} concludes with Theorem~\ref{thm:geo}, showing that the dictionary from $\CCS(G)$
to characters of $G(k)$ in fact encompasses every character vanishing on $G\der(k)$.

\subsection{Main definition}\label{ssec:noncomdef}

Recall from Section~\ref{sec:defs} that we refer to objects in category $\CS(G)$, defined as
in \cite{cunningham-roe:13a}*{Def. 1.1}, as linear character sheaves when $G$ is smooth but
not necessarily commutative.  
We define the following category to track the trivialization on the derived subgroup;
commutative character sheaves will then be defined as a subcategory.

\begin{definition}\label{def:CSab}
Let $\CS\ab(G)$ denote the category of triples $(\cs{L},\mu,\beta)$ where $(\cs{L},\mu) \in \CS(G)$ and
$\beta : \cs{L}\vert_{G\der} \to (\EE)_{G\der}$ is an isomorphism of Weil local systems on $G\der$.
A morphism $(\cs{L},\mu,\beta)\to (\cs{L}',\mu',\beta')$ is a morphism $\alpha : (\cs{L},\mu)\to (\cs{L}',\mu')$
in $\CS(G)$ such that $\beta = \beta' \circ \alpha\vert_{G\der}$.  
\end{definition}

The reason for tracking $\beta$ is that it determines an isomorphism $\gamma : m^*\cs{L} \to \theta^*m^*\cs{L}$ which will replace the $\xi$ of Definition \ref{def:CCScom}, as follows.
Let $i : G \to G$ be inversion and $c : G\times G\to G\der$ be the commutator map, defined by $c(x,y)= xyx^{-1}y^{-1}$.
Both are morphisms of $\Fq$-schemes.
Set $m' = i \circ m \circ \theta$ and let $j\der : G\der\to G$ be inclusion; then $j\der\circ c = m \circ (m \times m')$.
Then $\beta : \cs{L}\vert_{G\der} \to (\EE)_{G\der}$ determines the isomorphism $\gamma' : m^*\cs{L} \otimes \theta^* m^* i^*\cs{L} \to (\EE)_{G\times G}$ by the diagram of isomorphisms below.
\begin{equation}
\begin{tikzcd}
\arrow[equal]{d} c^* (\cs{L}\vert_{G\der}) \arrow{r}{c^*(\beta)} 
	&  c^*((\EE)_{G\der}) \arrow[equal]{d} \\
\arrow{d}[swap]{j\der\circ c = m \circ (m \times m')} c^* j\der^* \cs{L} 
	&   (\EE)_{G\times G} \\
(m \times m')^* m^* \cs{L} \arrow{d}[swap]{(m \times m')^*(\mu)} 
	&  m^*\cs{L} \otimes \theta^* m^* i^* \cs{L}  \arrow[dashed]{u}[swap]{\gamma'} \\
(m \times m')^* (\cs{L} \boxtimes \cs{L}) \arrow[equal]{r} 
	& m^*\cs{L} \otimes (m')^*\cs{L} \arrow{u}[swap]{m' = i\circ m\circ \theta} 
\end{tikzcd}
\end{equation}
In the diagram above, the arrows labeled with equations come from canonical isomorphisms of functors on Weil sheaves derived from the equations; so, for example, the middle left isomorphism comes from $(m\times m')^* m^* \iso c^* j\der^*$ since $j\der\circ c = m \circ (m \times m')$.
Using the monoidal structure of the category of Weil local systems on $G\times G$, the isomorphism $\gamma' : m^*\cs{L} \otimes \theta^* m^* i^*\cs{L} \to (\EE)_{G\times G}$ defines an isomorphism
\[
m^*\cs{L} \to (\theta^* m^* i^*\cs{L})^\vee.
\]
Applying the canonical isomorphisms $(\theta^* m^* i^*\cs{L})^\vee \iso \theta^* m^* i^* (\cs{L}^\vee)$ and $i^*(\cs{L}^\vee) \cong \cs{L}$, this map provides the promised isomorphism
\[
\begin{tikzcd}
\gamma : m^*\cs{L} \arrow{r} & \theta^* m^* \cs{L}.
\end{tikzcd}
\]

\begin{definition}\label{def:CCS}
The category $\CCS(G)$ of commutative character sheaves on $G$ is the full subcategory of $\CS\ab(G)$ consisting of triples $(\cs{L},\mu,\beta)$ such that the following diagram of Weil sheaves on $G \times G$ commutes:
  \[
  \begin{tikzcd}[row sep=30]
   m^*\cs{L} \arrow{d}[swap]{\gamma} \arrow{r}{\mu} & \cs{L}\boxtimes\cs{L} \arrow{d}{\vartheta}\\
   \theta^*(m^*\cs{L}) \arrow{r}{{\theta}^*\mu} &  \theta^*(\cs{L}\boxtimes\cs{L}).
  \end{tikzcd}
  \]
Here $\gamma : m^*\cs{L} \to \theta^* m^* \cs{L}$ is the isomorphism built from $\beta : \cs{L}\vert_{G\der} \to (\EE)_{G\der}$ as above. 
\end{definition}

\subsection{Descent}\label{ssec:descent}

In this section we give an equivalence of categories between $\CS(G\ab)$ and $\CS\ab(G)$ and use it to describe
the pullback functor $q^* : \CS(G\ab) \to \CS(G)$ in terms of the forgetful functor $\CS\ab(G) \to \CS(G)$, where $q : G\to G\ab$ is the abelianization quotient with kernel $G\der$.
But first, in order to study commutative character sheaves, we need some auxiliary categories.

\subsubsection{Equivariant Weil local systems}\label{ssec:equivariant1}

Let $\Loc(G)$ be the category of Weil local systems on $G$.
Let $\Loc\der(G)$ be the category of $G\der$-equivariant Weil local systems on $G$, whose definition we now recall.
Let $n : G\der\times G\to G$ be the restriction of $m : G\times G\to G$ to $G\der\times G$,
let $p : G\der\times G \to G$ be projection to the second component, and let $s: G \to G\der\times G$ be given by $s(g) = (1,g)$.
Then the quotient $q : G \to G\ab$ is a regular epimorphism of smooth group schemes with kernel pair $(n,p)$.
\[
\begin{tikzcd}
G\der\times G
 \arrow[shift left=1]{r}{n}
  \arrow[shift right=1,swap]{r}{p}
&
G 
\arrow{r}{q}
& 
G\ab
\end{tikzcd}
\]
Consider the morphisms
\[
\begin{tikzcd}
G\der \times G\der \times G 
\arrow[shift left=2]{r}{b_1, b_2, b_3} 
\arrow{r}{}
\arrow[shift right=2]{r}{} 
& G\der \times G 
\arrow[shift left=1]{r}{n}
\arrow[shift right=1]{r}[swap]{p}
 & G 
\end{tikzcd}
\]
defined by 
\begin{align*}
b_1(h_1,h_2,g) &= (h_1h_2,g) \\
b_2(h_1,h_2,g) &= (h_1,h_2g) \\
b_3(h_1,h_2,g) &= (h_2,g).
\end{align*}
Note that
\begin{align*}
n\circ b_1 &= n\circ b_2 \\
n\circ b_3 &= p\circ b_2 \numberthis \label{eqn:bap}\\
p\circ b_1 &= p\circ b_3.
\end{align*}
A $G\der$-equivariant Weil local system on $G$ is a Weil local system $\cs{L}$ on $G$ together with an isomorphism  
\[
\nu : n^*\cs{L} \to p^*\cs{L}
\] 
of Weil local systems on $G\der\times G$ such that 
\begin{equation}\label{E1}
s^*(\nu) = \id_{\cs{L}}
\end{equation}
and such that the following diagram of isomorphisms of Weil local systems on $G\der\times G\der\times G$ commutes. 
\begin{equation}\label{E2}
\begin{tikzcd}
\ &  \arrow{dl}{n\circ b_1 = n\circ b_2}  b_2^*  n^*\, \mathcal{L} \arrow{rr}{b_2^*(\nu)} && b_2^* p^*\, \mathcal{L} \arrow{dr}[swap]{p\circ b_2 = n\circ b_3} & \\
b_1^* n^*\, \mathcal{L} \arrow{dr}{b_1^*(\nu)} &&&&   \arrow{dl}[swap]{b_3^*(\nu)} b_3^* n^*\, \mathcal{L} \\
& b_1^* p^*\, \mathcal{L}  && \arrow{ll}[swap]{p\circ b_3 = p\circ b_1} b_3^* p^*\, \mathcal{L} & 
\end{tikzcd}
\end{equation}
Morphisms of $H$-equivariant Weil local systems $(\cs{L},\nu)\to (\cs{L}',\nu')$ are morphisms of Weil local systems $\alpha: \cs{L}\to \cs{L}'$ for which the diagram
\begin{equation}\label{E3}
\begin{tikzcd}
\arrow{d}[swap]{\nu} n^*\cs{L} \arrow{r}{n^*(\alpha)} & n^*\cs{L}' \arrow{d}{\nu'} \\
p^*\cs{L} \arrow{r}{p^*(\alpha)} & p^*\cs{L}'
\end{tikzcd}
\end{equation}
commutes.
This defines $\Loc\der(G)$, the category of $G\der$-equivariant Weil local systems on $G$.
The reader will recognize this notion as the Weil local system version of equivariant sheaves for the action $n$ of $G\der$ on $G$, as can be found, for example, in \cite{bernstein-lunts:equivariant}*{0.2}. 

Let $\Loc(G\ab)$ be the category of Weil local systems on $G\ab$.
If $\cs{L}\ab\in \Loc(G\ab)$ then $q^*\cs{L}\ab\in \Loc(G)$ comes equipped with a canonical isomorphism $\nu(\cs{L}\ab) : n^* \cs{L} \to p^* \cs{L}$ defined by the following diagram of isomorphisms.
\[
\begin{tikzcd}[column sep=40]
n^*\cs{L} \arrow[equal]{d} \arrow[dashed]{r}{\nu(\cs{L}\ab)} &  \arrow[equal]{d} p^*\cs{L} \\
n^* ( q^*\cs{L}\ab) \arrow[equal]{r}{q\circ n = q\circ p}
& p^* (q^*\cs{L}\ab)
\end{tikzcd}
\]
Then $(q^*\cs{L}\ab,\nu(\cs{L}\ab))$ satisfies \eqref{E1} and \eqref{E2}, so $(q^*\cs{L}\ab,\nu(\cs{L}\ab)) \in \Loc\der(G)$.
Moreover, if $\alpha\ab : \cs{L}\ab \to \cs{L}\ab$ is a morphism in $\Loc(G\ab)$ then $q^*(\alpha\ab)$ satisfies the condition in \eqref{E3}, so $q^*(\alpha\ab)$ is a morphism in $\Loc\der(G)$.
This defines the functor
\[
L : \Loc(G\ab)\to \Loc\der(G)
\]

\begin{lemma}\label{lemma:descent}
Suppose $G$ is a smooth group scheme.  
The functor $L : \Loc(G\ab)\to \Loc\der(G)$ is an equivalence.
\end{lemma}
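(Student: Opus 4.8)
The plan is to recognize $L : \Loc(G\ab)\to \Loc\der(G)$ as an instance of faithfully flat descent for the regular epimorphism $q : G \to G\ab$, whose kernel pair is exactly the pair $(n,p) : G\der\times G\rightrightarrows G$ identified in Section~\ref{ssec:equivariant1}. Since $q$ is a quotient by the flat subgroup scheme $G\der$, it is faithfully flat and quasi-compact, so étale (hence $\ell$-adic lisse sheaf) descent applies: the category of Weil local systems on $G\ab$ is equivalent to the category of Weil local systems on $G$ equipped with a descent datum relative to $G\times_{G\ab} G$. The content of the proof is to match our bookkeeping of $G\der$-equivariant structures $(\cs{L},\nu)$ with the standard descent data, checking that conditions \eqref{E1} and \eqref{E2} are precisely the normalization (restriction along the diagonal is the identity) and cocycle (compatibility on the triple fibre product) axioms, and that \eqref{E3} is the morphism compatibility.

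First I would set up the canonical isomorphism $G\der\times G \iso G\times_{G\ab} G$, $(h,g)\mapsto (hg,g)$, under which $n$ and $p$ become the two projections $\mathrm{pr}_1,\mathrm{pr}_2 : G\times_{G\ab}G \to G$; this is where $j\der\circ c$-type identities are used but in fact it is elementary since $hg$ and $g$ have the same image in $G\ab$. Then I would similarly identify $G\der\times G\der\times G \iso G\times_{G\ab}G\times_{G\ab}G$ via $(h_1,h_2,g)\mapsto (h_1h_2 g, h_2 g, g)$, and verify that under this identification the three maps $b_1,b_2,b_3$ correspond to the three projections $\mathrm{pr}_{12},\mathrm{pr}_{13},\mathrm{pr}_{23}$ to the double fibre product (up to composing with the first identification), so that the relations in \eqref{eqn:bap} become the usual simplicial identities. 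With these dictionaries in hand, the functor $L$ is literally the functor ``pull back along $q$ and equip with the tautological descent datum,'' and Lemma~\ref{lemma:descent} becomes the statement of effective descent for lisse $\ell$-adic sheaves along a faithfully flat quasi-compact morphism, which holds by \cite{SGA1} or by étale-local triviality of $q$ (since $G\der$ is smooth, $q$ admits sections étale-locally, reducing to trivial descent). The Weil structure adds nothing: a morphism $\phi:\Frob{}^*\gcs{L}\to\gcs{L}$ on $G\ab$ descends/ascends compatibly because $q$ is defined over $\Fq$ and commutes with Frobenius, so one applies ordinary descent to the pair.

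Concretely the steps are: (1) establish the two isomorphisms $G\der\times G\iso G\times_{G\ab}G$ and $G\der^2\times G\iso G\times_{G\ab}G\times_{G\ab}G$ and translate $n,p,s,b_1,b_2,b_3$ into projections and the diagonal; (2) observe that under this translation the data $(\nu$ with \eqref{E1},\eqref{E2}$)$ is exactly a descent datum, and \eqref{E3} is exactly a morphism of descent data; (3) invoke effective fppf (in fact étale) descent for lisse $\ell$-adic sheaves to conclude that the forgetful functor from descent data to $\Loc(G\ab)$ is an equivalence, with quasi-inverse $L$; (4) note the construction of $\nu(\cs{L}\ab)$ given just before the lemma is literally the canonical descent datum on $q^*\cs{L}\ab$, so the functor defined there agrees with the abstract $L$; (5) check that everything is compatible with Frobenius structures, which is automatic since $q$ is an $\Fq$-morphism. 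I expect the main obstacle to be purely organizational rather than mathematical: carefully matching the author's explicit maps $b_1,b_2,b_3$ and the orientation of the hexagonal diagram \eqref{E2} with the standard cocycle condition on $G\times_{G\ab}G\times_{G\ab}G$, since signs/orderings of the three projections must be tracked so that \eqref{E2} really is the descent cocycle identity and not its inverse or a permuted version. Once that combinatorial identification is pinned down, the equivalence is immediate from descent.
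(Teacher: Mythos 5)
Your argument is essentially the paper's own proof: the paper likewise observes that $q : G \to G\ab$ is a $G\der$-torsor for the fppf (hence fpqc) topology and then identifies the $G\der$-equivariant structure with a descent datum, invoking effective descent as in Vistoli's notes. Your additional bookkeeping (matching $n,p,s,b_1,b_2,b_3$ with the projections on $G\times_{G\ab}G$ and $G\times_{G\ab}G\times_{G\ab}G$, and the Frobenius compatibility) is exactly the content the paper leaves implicit, so the proposal is correct and follows the same route.
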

\begin{proof}
The quotient $q : G \to G\ab$ is an $G\der$-torsor in the fppf topology by \cite{demazure:SGA3-VIA}*{Thm. 3.2}, and thus a $G\der$-torsor in the fpqc topology.
The lemma is now a result from descent theory, arguing as in \cite{Vistoli:notes}*{Theorem 4.46} for example. 
\end{proof}

\subsubsection{Equivariant linear character sheaves}\label{ssec:equivariant2}

With reference to Section~\ref{ssec:equivariant1}, we define a $G\der$-equivariant linear character sheaf on $G$ to be a triple $(\cs{L},\mu, \nu)$, where $(\cs{L},\mu)$ is a linear character sheaf and $(\cs{L},\nu)$ is an $G\der$-equivariant Weil local system.  
A morphism of $G\der$-equivariant linear character sheaves $(\cs{L},\mu,\nu) \to (\cs{L}',\mu',\nu')$ is a morphism of $G\der$-equivariant Weil local systems $\alpha : \cs{L}\to \cs{L}'$ which is also a morphism of linear character sheaves; 
Let $\CS\der(G)$ be the category of $G\der$-equivariant linear character sheaves on $G$.

Consider the functor
\[
q^* : \CS(G\ab) \to \CS(G)
\]
given on objects by $(\cs{L}\ab,\mu\ab) \mapsto (q^*\cs{L}\ab, (q^2)^* \mu\ab)$; this is an instance of \cite{cunningham-roe:13a}*{Lem. 1.4}.
To see that $(q^*\cs{L}\ab, (q^2)^* \mu\ab)$ is indeed a linear character sheaf on $G$,
verify \cite{cunningham-roe:13a}*{CS.3}.
Now set $L(\cs{L}\ab) = (\cs{L},\nu)$, where $L : \Loc(G\ab) \to \Loc\der(G)$ is the comparison functor above, so $\cs{L} = q^*\cs{L}\ab$ and $\nu = \nu(\cs{L}\ab)$.
Then $(\cs{L}, \mu,\nu)$ is an object in $\CS\der(G)$.
If $\alpha\ab : (\cs{L}\ab,\mu\ab) \to (\cs{L}\ab',\mu\ab')$ is a morphism in $\CS(G\ab)$, 
then $q^*(\alpha\ab) : (\cs{L},\mu) \to (\cs{L}',\mu')$ satisfies \cite{cunningham-roe:13a}*{CS4}, so $\alpha = q^*(\alpha\ab)$ is a morphism in $\CS(G)$.
These observations define the comparison functor
\[
q\ab^* : \CS(G\ab) \to \CS\der(G)
\]
and also show that the functor $q^* : \CS(G\ab) \to \CS(G)$ factors according to the following commuting diagram of functors
\begin{equation}\label{qH}
\begin{tikzcd}
\CS(G) &\arrow{l}[swap]{q^*} 
\CS(G\ab) \arrow{dl}{q\ab^*}\\
\arrow{u}{\text{forget}} \CS\der(G). & 
\end{tikzcd}
\end{equation}
The definition of $q\ab^* : \CS(G\ab)\to \CS\der(G)$ will be revisited in the proof of Proposition~\ref{prop:CSGabab}.

Set $G^2 = G\times G$, so $G^2\der = G\der\times G\der$ and $G^2\ab = G\ab\times G\ab$.
Likewise define $n^2 : G\der^2 \times G^2 \to G^2$ and $p^2 : G\der^2\times G \to G$.

\begin{lemma}\label{lem:HH}
If $(\cs{L},\mu,\nu)$ is a $G\der$-equivariant linear character sheaf on $G$ then $\mu : m^*\cs{L} \to \cs{L}\boxtimes \cs{L}$ 
is a morphism of $G^2\der$-equivariant Weil local systems on $G^2$, as defined in Section \ref{ssec:equivariant1}.
\end{lemma}

\begin{proof} 
Define
\begin{align*}
d : G\der\times G\der\times G\times G &\to G\der\times G\times G\der\times G \\
(h_1,h_2,g_1,g_2) &\mapsto (h_1, g_1, h_2, g_2)\\
n_2 : G\der\times G\times G\der\times G &\to G\times G \\
(h_1,g_1,h_2,g_2) &\mapsto (h_1g_1, h_2g_2) \\
p_2 : G\der\times G\times G\der\times G &\to G\times G \\
(h_1,g_1,h_2,g_2) &\mapsto ( g_1,g_2).
\end{align*}
The following diagram defines the isomorphisms needed to see that both $m^*\cs{L}$ and $\cs{L}\boxtimes\cs{L}$ are $G\der^2$-equivariant Weil local systems.
\[
\begin{tikzcd}[column sep=40]
\arrow{d}[swap]{n_2^*(\mu)} n_2^* (m^*\cs{L}) \arrow[dashed]{r} 
	& \arrow{d}{p_2^*(\mu)} p_2^*(m^*\cs{L})\\
\arrow{d}[swap]{n_2= n^2\circ d} n_2^*(\cs{L}\boxtimes\cs{L}) \arrow[dashed]{r} 
	& \arrow{d}{p_2= p^2\circ d} p_2^*(\cs{L}\boxtimes\cs{L}) \\
d^* (n^*\cs{L}\boxtimes n^*\cs{L}) \arrow{r}{d^*(\nu\boxtimes \nu)}& d^*(p^*\cs{L}\boxtimes p^*\cs{L}) 
\end{tikzcd}
\]
The dashed arrows both satisfy \eqref{E1} and \eqref{E2} as they apply here.
This diagram also shows that $\mu : m^*\cs{L} \to \cs{L}\boxtimes \cs{L}$ is a morphism of $G\der^2$-equivariant local systems, since it satisfies \eqref{E3} as it applies here.
\end{proof}

\begin{proposition} \label{prop:CSGabab}
Suppose $G$ is a smooth group scheme.  
Then pullback along $q : G \to G\ab$ defines an equivalence $\CS(G\ab) \to \CS\der(G)$.
\end{proposition}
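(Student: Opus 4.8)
The plan is to factor the asserted equivalence through the category $\CS\der(G)$ of $G\der$-equivariant linear character sheaves, using the equivalence $\CS\der(G)\simeq\CSab(G)$ of Lemma~\ref{lemma:nubeta}. In view of the commuting triangle~\eqref{qH} and that equivalence, it suffices to prove that the comparison functor $q\ab^* : \CS(G\ab)\to\CS\der(G)$ is an equivalence; unwinding the construction of $q\ab^*$ then identifies the composite $\CS(G\ab)\to\CS\der(G)\to\CSab(G)$ with the pullback functor $(\cs{L}\ab,\mu\ab)\mapsto (q^*\cs{L}\ab,(q^2)^*\mu\ab,\beta)$, where $\beta$ is the canonical trivialization of $q^*\cs{L}\ab$ over $G\der$ coming from the fact that $q\vert_{G\der}$ is the constant map to the identity.

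First I would establish that $q\ab^*$ is fully faithful. A morphism in either category is a morphism of the underlying (equivariant) Weil local systems that is compatible with the multiplicative structure. By Lemma~\ref{lemma:descent}, $L:\Loc(G\ab)\to\Loc\der(G)$ is an equivalence, so pullback along $q$ is a bijection on Hom-sets of underlying Weil local systems; and since $(q^2)^*$ is faithful, a map $\cs{L}\ab\to\cs{L}\ab{}'$ is compatible with $\mu\ab,\mu\ab{}'$ if and only if its pullback is compatible with the pulled-back structures. Full faithfulness of $q\ab^*$ follows.

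The heart of the argument is essential surjectivity. Given $(\cs{L},\mu,\nu)\in\CS\der(G)$, Lemma~\ref{lemma:descent} produces $\cs{L}\ab\in\Loc(G\ab)$ with $L(\cs{L}\ab)\cong(\cs{L},\nu)$, so $\cs{L}\cong q^*\cs{L}\ab$. To descend $\mu$, I would apply the same descent result on $G^2$: since $q^2 : G^2\to G^2\ab=(G\ab)^2$ is a $G^2\der$-torsor (fppf, hence fpqc), Lemma~\ref{lemma:descent} applies verbatim with $G$ replaced by $G^2$. By Lemma~\ref{lem:HH}, $\mu : m^*\cs{L}\to\cs{L}\boxtimes\cs{L}$ is a morphism of $G^2\der$-equivariant Weil local systems; since $q\circ m = m\ab\circ q^2$ gives $m^*\cs{L}\cong (q^2)^*(m\ab)^*\cs{L}\ab$ and $\cs{L}\boxtimes\cs{L}\cong (q^2)^*(\cs{L}\ab\boxtimes\cs{L}\ab)$ as equivariant local systems, $\mu$ descends to an isomorphism $\mu\ab : (m\ab)^*\cs{L}\ab\to\cs{L}\ab\boxtimes\cs{L}\ab$ on $(G\ab)^2$. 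It then remains to check that $(\cs{L}\ab,\mu\ab)$ satisfies the character-sheaf axioms of \cite{cunningham-roe:13a}*{Def.~1.1}; each axiom is an equality of morphisms between Weil local systems pulled back along $q^2$ or $q^3$ (again a torsor quotient), so it descends from the corresponding axiom for $(\cs{L},\mu)$ by faithfulness of pullback. Finally, by construction $q\ab^*(\cs{L}\ab,\mu\ab)\cong(\cs{L},\mu,\nu)$, which combined with full faithfulness gives the equivalence $\CS(G\ab)\to\CS\der(G)$ and hence $\CS(G\ab)\to\CSab(G)$.

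The main obstacle I anticipate is bookkeeping rather than substance: one must check that the $G^2\der$-equivariant structures on $m^*\cs{L}$ and $\cs{L}\boxtimes\cs{L}$ singled out in Lemma~\ref{lem:HH} coincide with the canonical descent data attached to their presentations as pullbacks along $q^2$, so that Lemma~\ref{lemma:descent} on $G^2$ can be invoked; the analogous point on $G^3$ must be verified in order to descend the axioms. These compatibilities amount to tracking several canonical isomorphisms of pullback functors built from the relations among $m$, $m\ab$, $n$, $p$, $n^2$, $p^2$, and present no genuine difficulty once the $(n,p)$–$(n^2,p^2)$ formalism of Section~\ref{ssec:descent} is in place.
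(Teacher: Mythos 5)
Your proposal is correct and follows essentially the same route as the paper: reduce via Lemma~\ref{lemma:nubeta} to showing the comparison functor $q\ab^* : \CS(G\ab)\to\CS\der(G)$ is an equivalence, then conclude from the descent equivalences $L$ and $L^2$ of Lemma~\ref{lemma:descent} together with Lemma~\ref{lem:HH}. You merely spell out full faithfulness, essential surjectivity, and the descent of the character-sheaf axioms more explicitly than the paper's compressed argument.
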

\begin{proof}
Let $L^2 : \Loc(G^2\ab) \to \Loc\der(G^2)$ be the comparison functor for the quotient $q^2  : G^2 \to G^2\ab$. 
Then $L^2$ is also an equivalence by Lemma~\ref{lemma:descent}.
Moreover, using Lemma~\ref{lem:HH}, we may rewrite the comparison functor $q\ab^*$ on objects by
\[
\begin{array}{rcl}
\CS(G\ab) &\to& \CS\der(G) \\
(\cs{L}\ab,\mu\ab) &\mapsto& (L(\cs{L}\ab), L^2(\mu\ab)).
\end{array}
\]
and on morphisms by $\alpha \mapsto L(\alpha)$.
The proposition now follows from the fact that both $L$ and $L^2$ are equivalences.
\end{proof}

\subsubsection{Rigidification} 

We may now relate $\CS\ab(G)$ to $\CS(G\ab)$.

\begin{proposition}\label{prop:nubeta}
The categories $\CS\ab(G)$ and $\CS(G\ab)$ are equivalent.
\end{proposition}

\begin{proof}
In light of Proposition~\ref{prop:CSGabab}, it suffices to exhibit an equivalence  between $\CS\der(G)$ and $\CS\ab(G)$.

We begin by defining a functor $\CS\ab(G) \to \CS\der(G)$.
Let $j\der : G\der \to G$ be the kernel of $q:G \to G\ab$ 
Define $k: G\der\times G\to G\times G$ by $k(h,g) = (j\der(h),g)$.
Then for $(\cs{L},\mu,\nu) \in \CS\der(G)$, define $\beta : \cs{L}\vert_{G\der} \to (\EE)_{G\der}$ by the following diagram.
\begin{equation}\label{eqn:betanu}
\begin{tikzcd}
& n^* \cs{L} \arrow[equal]{dl}{m\circ k = n} \arrow{r}{\nu} & p^*\cs{L} &  \\ 
k^* m^* \cs{L} \arrow{dr}{k^*(\mu)} &&& (\EE)_{G\der} \boxtimes \cs{L} \arrow[equal]{ul} \\
& k^*(\cs{L}\boxtimes\cs{L}) \arrow[equal]{r} & \cs{L}\vert_{G\der}\boxtimes \cs{L} \arrow{ur}{\beta\boxtimes \id_\cs{L}}
\end{tikzcd}
\end{equation}
Then $\beta = r\der \circ j^*(\nu)$, where $r\der : (\cs{L}_1)_{G\der} \to (\EE)_{G\der}$ is the isomorphism of constant local systems  determined by the rigidification $r: \cs{L}_1 \to \EE$ of $\cs{L}$ determined by $\mu$ (see \cite{cunningham-roe:13a}*{Remark~1.11}) so that
\begin{equation}\label{eqn:rigidification}
\begin{tikzcd}
\cs{L}_1 \arrow{r}{r} \arrow{d}{\mu_{(1,1)}}  & \EE \arrow[equal]{d} \\
\cs{L}_1\otimes\cs{L}_1 \arrow{r}{r\otimes r}  & \EE\otimes  \EE
\end{tikzcd}
\end{equation}
commutes where, as usual, $=$ denotes a canonical isomorphism.
To show that $(\cs{L},\mu,\beta) \in \CS\ab(G)$, we must see that $\beta$ is a morphism in $\CS(G\der)$ by showing that the following diagram commutes in $\Loc(G\der^2)$.
\begin{equation}\label{eqn:betaGder}
\begin{tikzcd}
m\der^*\cs{L}\vert_{G\der} \arrow{rr}{m\der^* \beta} \arrow[equal]{d} && \arrow[equal]{d} (\EE)_{G\der^2}\\
\cs{L}\vert_{G\der} \boxtimes \cs{L}\vert_{G\der} \arrow{rr}{\beta\boxtimes\beta} &&  (\EE)_{G\der}\boxtimes (\EE)_{G\der} 
\end{tikzcd}
\end{equation}
By Proposition~\ref{prop:CSGabab}, $\cs{L} = q^*\cs{L}\ab$, for some $(\cs{L}\ab,\mu\ab)\in \CS(G\ab)$.
Thus, $\cs{L}\vert_{G\der} = (\cs{L}_1)_{G\der}$ and the diagram above becomes a diagram of constant Weil local systems on $G\der^2$.
We can therefore test  whether  diagram commutes by evaluating all local systems at $(1,1)\in G\der^2$.  Doing this recovers \eqref{eqn:rigidification} from \eqref{eqn:betaGder}, so \eqref{eqn:rigidification} commutes.
After confirming that morphisms that commute with $\mu$ and $\beta$ also commute with $\mu$ and $\nu$, this concludes the definition of the functor  $\CS\ab(G) \to \CS\der(G)$.

The adjoint functor $\CS\der(G) \to \CS\ab(G)$ is given by a  similar  strategy: given $(\cs{L},\mu,\beta)\in \CS\der(G)$, define $(\cs{L},\mu,\nu)$ again using  \eqref{eqn:betanu}.
Then verify that $(\cs{L},\nu)$ satisfies \eqref{E2} as it applies here, so that $(\cs{L},\nu)\in \Loc\der(G)$.
After confirming that morphisms that commute with $\mu$ and $\nu$ also commute with $\mu$ and $\beta$, this concludes the definition of the functor  $\CS\der(G) \to \CS\ab(G)$.
From these constructions,  it is now clear that $\CS\der(G) \to \CS\ab(G)$ is an equivalence.
\end{proof}

\begin{corollary} 
If $G$ is a smooth group scheme and $(\cs{L},\mu) \in \CS(G)$, then
the restriction of $(\cs{L},\mu)$ to $G\der$ is trivial if and only if $(\cs{L},\mu) \cong q^*(\cs{L}\ab,\mu\ab)$ in $\CS(G)$, for some $(\cs{L}\ab,\mu\ab) \in \CS(G\ab)$.
\end{corollary}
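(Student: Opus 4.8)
The plan is to read this corollary off Proposition~\ref{prop:CSGabab} via Lemma~\ref{lemma:nubeta}; here ``trivial'' should be understood as ``isomorphic to $(\EE)_{G\der}$ in $\CS(G\der)$'', matching the role of $\beta$ in Definition~\ref{def:CSab}. The observation I would establish first is that the functor $q^*:\CS(G\ab)\to\CS(G)$ factors as
\[
\CS(G\ab)\ \xrightarrow{\ q\ab^*\ }\ \CS\der(G)\ \xrightarrow{\ \sim\ }\ \CSab(G)\ \xrightarrow{\ \text{forget}\ }\ \CS(G),
\]
where the middle arrow is the equivalence of Lemma~\ref{lemma:nubeta}, and the composite of the first two arrows is precisely the equivalence $\CS(G\ab)\to\CSab(G)$ of Proposition~\ref{prop:CSGabab}; concretely this amounts to checking that diagram~\eqref{qH} is compatible with Lemma~\ref{lemma:nubeta}. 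Granting this, $(\cs{L},\mu)$ lies in the essential image of $q^*$ if and only if it lies in the essential image of the forgetful functor $\CSab(G)\to\CS(G)$.

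From there the two implications are short. For the ``if'' direction, given an isomorphism $\beta:\cs{L}\vert_{G\der}\to(\EE)_{G\der}$ in $\CS(G\der)$, the triple $(\cs{L},\mu,\beta)$ is an object of $\CSab(G)$, hence isomorphic to $q^*(\cs{L}\ab,\mu\ab)$ for some $(\cs{L}\ab,\mu\ab)\in\CS(G\ab)$ by Proposition~\ref{prop:CSGabab}; forgetting $\beta$ gives $(\cs{L},\mu)\cong q^*(\cs{L}\ab,\mu\ab)$ in $\CS(G)$. For the ``only if'' direction, if $(\cs{L},\mu)\cong q^*(\cs{L}\ab,\mu\ab)$ in $\CS(G)$, then by the factorization above $(\cs{L},\mu)$ underlies an object $(\cs{L},\mu,\beta)$ of $\CSab(G)$ --- explicitly, $\beta$ is obtained by pulling the canonical descent datum $\nu(\cs{L}\ab)$ back along $j:G\der\to G\der\times G$, $j(h)=(h,1)$, as in the proof of Lemma~\ref{lemma:nubeta}, and transporting along the given isomorphism --- and this $\beta$ is the required trivialization of $\cs{L}\vert_{G\der}$.

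I expect the only real work to be in the first paragraph: verifying that the comparison functor $q\ab^*$ of Section~\ref{ssec:descent}, the equivalence $\CS\der(G)\simeq\CSab(G)$ of Lemma~\ref{lemma:nubeta}, and the functor of Proposition~\ref{prop:CSGabab} fit together, so that $q^*$ genuinely factors through the forgetful functor with an equivalence in front. This is routine diagram-chasing with the descent data $\nu$, but it is the step one must get right; everything else is formal once it is in place. It is also worth flagging that the content of the statement is genuinely about $\CS(G\der)$ and not merely about underlying local systems: the restriction $(q^*\cs{L}\ab)\vert_{G\der}$ is trivially constant, since $q\circ j\der$ is the trivial homomorphism $G\der\to G\ab$, so the substance is that the trivialization can be chosen compatibly with the multiplicative structure $\mu$, which is exactly what the equivalence packages.
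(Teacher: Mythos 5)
Your proposal is correct and follows essentially the same route as the paper: both directions are read off from Proposition~\ref{prop:CSGabab} together with the factorization of $q^*$ through the forgetful functor $\CSab(G)\to\CS(G)$, with ``trivial'' meaning isomorphic to $(\EE)_{G\der}$ in $\CS(G\der)$. The only cosmetic difference is in the ``only if'' direction, where the paper simply restricts along $i\der$ and uses $q\circ i\der=1$ to see that $i\der^*q^*(\cs{L}\ab,\mu\ab)$ is trivial, rather than transporting the trivialization $\beta$ through the descent data as you do --- an observation you in fact also make in your closing remark.
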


\begin{proof}
Notation as in the proof of Proposition~\ref{prop:CSGabab}.
Consider the following diagram.
\[
\begin{tikzcd}
\CS(G\der) & \arrow{l}[swap]{j\der^*} \CS(G) &\arrow{l}[swap]{q^*} 
\CS(G\ab) \arrow{dl}{q\ab^*} \\
& \CS\ab(G) \arrow{u}{\text{forget}}  & 
\end{tikzcd}
\]

Now, suppose $(\cs{L},\mu)\in \CS(G)$ and there is an isomorphism $\beta : \cs{L}\vert_{G\der} \to (\EE)_{G\der}$ so that
$(\cs{L},\mu,\beta) \in \CS\ab(G)$.
By Proposition \ref{prop:CSGabab}, there is some $(\cs{L}\ab,\mu\ab)\in \CS(G\ab)$ with $(\cs{L},\mu,\beta) \iso q\ab^*(\cs{L}\ab,\mu\ab)$.
Applying the forgetful functor $\CS_{G\der}(G)\to \CS(G)$ to this isomorphism, it follows that $(\cs{L},\mu) \iso q^*(\cs{L}\ab,\mu\ab)$ in $\CS(G)$, as desired. 

Conversely, suppose $(\cs{L},\mu)\in \CS(G)$ and $(\cs{L},\mu) \iso q^*(\cs{L}\ab,\mu\ab)$ in $\CS(G)$.
Then 
\[
j\der^*(\cs{L},\mu) \iso j\der^*q^*(\cs{L}\ab,\mu\ab)
\]
 in $\CS(G\der)$.
Since $q\circ j\der = 1$, it follows that the restriction of $(\cs{L},\mu)$ to $G\der$ is trivial.
\end{proof}

We may interpret this corollary as measuring how far $q^*$ is from being essentially surjective.  The next result shows that it is also not full.
Let $C$ denote the cokernel of the natural map
\[
\Hom(\pi_0(\bar{G})_{\Frob{}}, \EEx) \to \Hom(\pi_0(\bar{G}\der)_{\Frob{}}, \EEx),
\]
where $\pi_0(\bar{G})_{\Frob{}}$ denotes the coinvariants of the action of Frobenius on
the component group of $\bar{G}$

\begin{corollary}
If $G$ is a smooth group scheme and $(\cs{L},\mu)$ is a linear character sheaf on $G$ with trivial restriction to $G\der$,
then the set of isomorphism classes of objects in $\CS(G\ab)$ mapping to $(\cs{L},\mu)$ under $q^*$ is a principal homogeneous space
for $C$.
\end{corollary}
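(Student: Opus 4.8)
The plan is to reduce the statement to a statement about $\CSab(G)$ via Proposition~\ref{prop:CSGabab}, and then to describe the fibres of the forgetful functor $\CSab(G) \to \CS(G)$ explicitly. By Proposition~\ref{prop:CSGabab}, pullback along $q$ is an equivalence $\CS(G\ab) \to \CSab(G)$, so the set of isomorphism classes in $\CS(G\ab)$ mapping to a fixed $(\cs{L},\mu) \in \CS(G)$ under $q^*$ is in bijection with the set of isomorphism classes of objects $(\cs{L},\mu,\beta) \in \CSab(G)$ lying over $(\cs{L},\mu)$, i.e. the set of trivializations $\beta : \cs{L}\vert_{G\der} \to (\EE)_{G\der}$ in $\CS(G\der)$, taken up to the action of $\Aut_{\CS(G)}(\cs{L},\mu)$ (two triples $(\cs{L},\mu,\beta)$ and $(\cs{L},\mu,\beta')$ are isomorphic in $\CSab(G)$ precisely when $\beta' = \beta \circ \alpha\vert_{G\der}$ for some $\alpha \in \Aut_{\CS(G)}(\cs{L},\mu)$). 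Since we are given that $\cs{L}\vert_{G\der}$ is trivial, the set of such $\beta$ is nonempty, hence a torsor under $\Aut_{\CS(G\der)}((\EE)_{G\der})$.

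The second step is to identify $\Aut_{\CS(G\der)}((\EE)_{G\der})$ with $\Hom(\pi_0(\bar{G}\der)_{\Frob{}}, \EEx)$ and $\Aut_{\CS(G)}(\cs{L},\mu)$ with $\Hom(\pi_0(\bar{G})_{\Frob{}}, \EEx)$, and to check that the restriction map $\beta \mapsto \beta \circ \alpha\vert_{G\der}$ corresponds, after these identifications, to the pullback map $\Hom(\pi_0(\bar{G})_{\Frob{}}, \EEx) \to \Hom(\pi_0(\bar{G}\der)_{\Frob{}}, \EEx)$ induced by $\pi_0(j\der)$. An automorphism of a character sheaf is an automorphism of the underlying Weil local system compatible with $\mu$ and $\phi$; an automorphism of a rank-one Weil local system on a connected-component-wise-connected scheme is given by a scalar on each geometric component, and compatibility with $\phi$ together with the character-sheaf condition forces this scalar function to be a Frobenius-invariant homomorphism $\pi_0(\bar{G}) \to \EEx$, i.e. an element of $\Hom(\pi_0(\bar{G})_{\Frob{}}, \EEx)$; the same analysis on $G\der$ gives $\Hom(\pi_0(\bar{G}\der)_{\Frob{}}, \EEx)$, and naturality of the construction makes restriction to $G\der$ correspond to the map in the statement. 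Granting this, the set of isomorphism classes of triples over $(\cs{L},\mu)$ is the quotient of the torsor under $\Hom(\pi_0(\bar{G}\der)_{\Frob{}}, \EEx)$ by the image of $\Hom(\pi_0(\bar{G})_{\Frob{}}, \EEx)$, which is exactly a principal homogeneous space for the cokernel $C$.

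Finally, I would note that nonemptiness of the fibre is exactly the hypothesis that $\cs{L}\vert_{G\der}$ is trivial — without it the fibre could be empty — and record that the torsor structure is canonical once $(\cs{L},\mu)$ is fixed. The main obstacle I anticipate is the bookkeeping in the second step: namely, verifying carefully that the automorphism group of a character sheaf is $\Hom(\pi_0(\bar{G})_{\Frob{}},\EEx)$ (this should be essentially the computation of automorphism groups promised for Section~\ref{ssec:obmor}, so it may be cleanest to cite that computation) and, more delicately, that under the equivalence of Proposition~\ref{prop:CSGabab} the action of $\Aut_{\CS(G)}(\cs{L},\mu)$ on trivializations $\beta$ is intertwined with the stated restriction homomorphism on $\Hom(\pi_0(-)_{\Frob{}},\EEx)$ — this requires tracing through the definitions of $\CSab(G)$ and of $q\ab^*$ and checking the relevant squares commute, which is routine but fiddly.
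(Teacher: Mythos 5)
Your proposal is correct and follows essentially the same route as the paper's proof: reduce via Proposition~\ref{prop:CSGabab} to the fibre of the forgetful functor $\CSab(G)\to\CS(G)$, observe it is nonempty, identify the set of trivializations $\beta$ as a torsor under $\Aut_{\CS(G\der)}((\EE)_{G\der})\cong\Hom(\pi_0(\bar{G}\der)_{\Frob{}},\EEx)$, and quotient by the action of $\Aut_{\CS(G)}(\cs{L},\mu)\cong\Hom(\pi_0(\bar{G})_{\Frob{}},\EEx)$ via restriction, yielding a principal homogeneous space for $C$. The automorphism-group computations you flag as the remaining bookkeeping are exactly what the paper handles by citing \cite{cunningham-roe:13a}*{Theorem 3.9}.
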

\begin{proof}
By Proposition~\ref{prop:CSGabab}, it suffices to find the set of isomorphism classes in $\CS\ab(G)$ mapping to
$(\cs{L},\mu)$ under the forgetful functor.  By the previous corollary this set is nonempty.
If $(\cs{L},\mu,\beta)$ and $(\cs{L},\mu,\beta')$ both map to $(\cs{L},\mu)$, then $\beta' \circ \beta^{-1}$ is an
automorphism of the constant sheaf on $G\der$.  Conversely, if $\varphi$ is an automorphism of $(\EE)_{G\der}$
and $(\cs{L},\mu,\beta) \in \CS\ab(G)$ then $(\cs{L},\mu,\varphi \circ \beta) \in \CS\ab(G)$.  As in \cite{cunningham-roe:13a}*{Theorem 3.9},
the automorphism group is isomorphic to $\Hom(\pi_0(\bar{G}\der)_{\Frob{}}, \EEx)$.  Finally, we note that any
automorphism $\alpha$ of $(\cs{L},\mu) \in \CS(G)$ defines an isomorphism $(\cs{L},\mu,\beta \circ \alpha|_{G\der}) \to (\cs{L},\mu,\beta)$.
Applying the analogue of \cite{cunningham-roe:13a}*{Theorem 3.9} again yields the desired result.
\end{proof}

\subsection{Objects and maps in commutative character sheaves} \label{ssec:obmor}

We are now in a position to prove that commutative character sheaves on $G$ match perfectly with commutative character sheaves on $G\ab$.
We start with a method that will allow us to situate the diagram in Definition \ref{def:CCS} within $\CS\ab(G^2)$.

\begin{lemma}\label{lemma:morinCSab}
If $(\cs{L},\mu,\beta)\in \CS\ab(G)$ then $\mu  : m^*\cs{L} \to \cs{L}\boxtimes\cs{L}$, $\gamma : m^*\cs{L} \to  \theta^*(m^*\cs{L})$ and $\vartheta: \cs{L}\boxtimes\cs{L} \to \theta^*(\cs{L}\boxtimes \cs{L})$ are morphisms in $\CS\ab(G^2)$.
\end{lemma}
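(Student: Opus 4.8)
The plan is to verify three conditions for each of the maps $\mu$, $\gamma$, $\vartheta$: that each is a morphism of Weil local systems on $G^2$ (already part of the data, or built from data), that each is a morphism of linear character sheaves on $G^2$ in the sense of \cite{cunningham-roe:13a}*{CS.4}, and that each respects the trivializations on $(G^2)\der = G\der\times G\der$. The first point is essentially automatic: $\mu$ is part of the given datum, $\vartheta$ is the canonical swap isomorphism $\gcs{L}_g\otimes\gcs{L}_h\to\gcs{L}_h\otimes\gcs{L}_g$, and $\gamma$ was constructed in Section~\ref{ssec:noncomdef} as a composite of canonical isomorphisms of Weil local systems together with $\mu$ and $\beta$, so all three are visibly morphisms in $\Loc((G^2))$.

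Next I would check the character-sheaf-morphism condition, i.e.\ compatibility with the multiplication isomorphisms on $G^2$. For $\mu$ this is precisely Lemma~\ref{lem:HH} combined with the equivalence of Lemma~\ref{lemma:nubeta}: Lemma~\ref{lem:HH} shows $\mu$ is a morphism of $G^2\der$-equivariant Weil local systems, and hence, transported across the equivalence $\CS\der(G^2)\simeq\CS\ab(G^2)$, a morphism in $\CS\ab(G^2)$ once we know $m^*\cs{L}$ and $\cs{L}\boxtimes\cs{L}$ are objects of $\CS\ab(G^2)$. For $\vartheta$, compatibility follows because $\theta : G^2\to G^2$ is a group automorphism, so $\vartheta$ is pulled back from the analogous swap on $(G^2)\ab = G\ab\times G\ab$, and pullback along group homomorphisms carries character sheaves to character sheaves functorially (\cite{cunningham-roe:13a}*{Lem. 1.4}); the diagram expressing CS.4 for $\vartheta$ then commutes by the naturality of the canonical tensor-swap isomorphism. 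For $\gamma$ the cleanest route is to observe that $\gamma$ is assembled entirely from $\mu$ (known to satisfy CS.4 by the previous case) and the canonical isomorphisms $m^*$, $\theta^*m^*$, $i^*$, $(-)^\vee$ of functors on Weil sheaves associated to equalities of group morphisms; each such canonical isomorphism is a morphism of character sheaves by functoriality of the constructions in \cite{cunningham-roe:13a}, and CS.4 is stable under composition, so $\gamma$ inherits the property.

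Finally I would treat the derived-subgroup trivializations. The objects $m^*\cs{L}$ and $\cs{L}\boxtimes\cs{L}$ acquire their $\CS\ab(G^2)$-structure, i.e.\ their trivializations on $G\der\times G\der$, from $\beta$: restricting $\mu$ along $G\der\times G\der\hookrightarrow G\times G$ and using that $m$ maps $G\der\times G\der$ into $G\der$, one gets a trivialization of $(m^*\cs{L})\vert_{G\der\times G\der}$ from $\beta$, while $(\cs{L}\boxtimes\cs{L})\vert_{G\der\times G\der}\cong \cs{L}\vert_{G\der}\boxtimes\cs{L}\vert_{G\der}$ is trivialized by $\beta\boxtimes\beta$. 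That $\mu$ intertwines these two trivializations is exactly the compatibility built into the definition; that $\vartheta$ does so follows because the swap of $\EE\otimes\EE$ is the identity; and that $\gamma$ does so follows by tracing the defining diagram of $\gamma$ restricted to $G\der\times G\der$, where by construction every arrow reduces to an identity once $\beta$ is inserted. I expect the main obstacle to be purely bookkeeping: keeping straight which canonical isomorphism of functors corresponds to which equality of morphisms $G^2\to G$ (in particular the identities $j\der\circ c = m\circ(m\times m')$ and $n\circ b_i = \cdots$ used to build $\gamma$ and the equivariant structures), and confirming that each is genuinely a morphism in $\CS\ab$ rather than merely in $\Loc$; once the equivalences of Lemmas~\ref{lemma:nubeta}, \ref{lemma:descent} and Proposition~\ref{prop:CSGabab} are in hand, no new geometric input is needed.
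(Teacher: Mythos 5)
Your overall strategy matches the paper's: equip $m^*\cs{L}$, $\cs{L}\boxtimes\cs{L}$ and their $\theta^*$-pullbacks with $\CSab(G^2)$-structures and observe that $\mu$, $\gamma$, $\vartheta$ respect them; in particular your construction of the trivializations on $G^2\der = G\der\times G\der$ (restricting $\mu$ and using $\beta\boxtimes\beta$) is exactly the paper's $\beta_m^2$ and $\beta_\boxtimes^2$. The gap is in the other half of the object structure. To speak of morphisms in $\CSab(G^2)$ you must first exhibit the sources and targets as objects of $\CS(G^2)$, i.e.\ construct multiplication isomorphisms $\mu_m^2 : (m^2)^*(m^*\cs{L}) \to m^*\cs{L}\boxtimes m^*\cs{L}$ and $\mu_\boxtimes^2$ for $\cs{L}\boxtimes\cs{L}$ (the paper builds $\mu_m^2$ by pulling $\mu$ back along $m^2(g_1,g_2,g_1',g_2')=(g_1g_1',g_2g_2')$ and using $p_i\circ m^2 = m\circ p^2_i$), and then verify that $\mu$ intertwines them. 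Your proposal never produces these multiplication isomorphisms --- in your last paragraph you describe the $\CSab(G^2)$-structure as consisting only of the trivializations --- and your justification of the multiplicative compatibility of $\mu$ is off target: Lemma~\ref{lem:HH} shows $\mu$ is a morphism of $G^2\der$-\emph{equivariant} Weil local systems, i.e.\ compatibility with the action maps, not with the multiplication isomorphisms over $G^2$; and transporting through the equivalence $\CS\der(G^2)\simeq\CSab(G^2)$ of Lemma~\ref{lemma:nubeta} presupposes that $\mu$ is already a morphism of linear character sheaves on $G^2$, which is exactly what is to be established at this step. As written, the argument for $\mu$ is circular at the key point.

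The repair is along the lines you already use for $\vartheta$ and $\gamma$: define $\mu_m^2$ and $\mu_\boxtimes^2$ from $\mu$ and the canonical isomorphisms attached to the identities of morphisms $G^2\times G^2 \to G$, check that $(m^*\cs{L},\mu_m^2)$ and $(\cs{L}\boxtimes\cs{L},\mu_\boxtimes^2)$ satisfy the conditions of \cite{cunningham-roe:13a}*{Def.~1.1}, and note that the square expressing the morphism condition for $\mu$ commutes because both structures are assembled from $\mu$ together with the multiplicativity axioms for $(\cs{L},\mu)$ --- this is what the paper's ``by construction'' is doing. With that supplied, your handling of $\vartheta$ (naturality of the swap), of $\gamma$ (a composite of canonical isomorphisms and maps already known to be morphisms), and your trivialization checks do complete the proof in essentially the paper's way.
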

\begin{proof}
Define $m^2 : G^2\times G^2 \to G^2$ by $m^2(g_1,g_2,g_1',g_2') = (g_1g_1',g_2g_2')$. Also define $p^2_i : G^2\times G^2 \to G^2$ by $p^2_i(g_1,g_2,g_1,'g_2') = (g_i,g_i')$.
First we show that $m^*\cs{L}$ is an object in $\CS(G^2)$ by equipping it with an isomorphism $\mu_m^2: (m^2)^* (m^*\cs{L}) \to m^*\cs{L} \boxtimes m^*\cs{L}$ defined by the diagram below.
\[
\begin{tikzcd}
(m^2)^* (m^*\cs{L}) \arrow{d}{(m^2)^*\mu} \arrow[dashed]{rr}{\mu_m^2} && m^*\cs{L} \boxtimes m^*\cs{L}\\
 (m^2)^*(\cs{L}\boxtimes\cs{L}) \arrow[equal]{r} & (m^2)^*(p_1)^*\cs{L} \otimes (m^2)^*(p_2)^*\cs{L} \arrow{r} & (p^2_1)^*m^*\cs{L} \otimes (p^2_2)^*m^*\cs{L} \arrow[equal]{u}
\end{tikzcd}
\] 
The pair $(m^*\cs{L},\mu_m^2)$ satisfies the conditions appearing in \cite{cunningham-roe:13a}*{Def. 1.1}.
The restriction of $m^*\cs{L}$ to $G^2\der= G\der\times G\der$ is canonically isomorphic to $(\EE)_{G^2\der}$ by
\[
\begin{tikzcd}
(m^*\cs{L})\vert_{G^2\der} \arrow{d}{\mu\vert_{G^2\der}} \arrow[dashed]{r}{\beta^2_m} & (\EE)_{G^2\der}\\
(\cs{L}\boxtimes\cs{L})\vert_{G^2\der} \arrow[equal]{r} & (\cs{L}\vert_{G\der})\boxtimes(\cs{L}\vert_{G\der}) \arrow{u}{\beta\boxtimes\beta}.
\end{tikzcd}
\]
This shows that $(m^*\cs{L},\mu_m^2,\beta_m^2) \in \CS\ab(G^2)$. 
Similar work defines $(\cs{L}\boxtimes\cs{L},\mu_\boxtimes^2,\beta_\boxtimes^2) \in \CS\ab(G^2)$.
By construction, $\mu: m^*\cs{L} \to \cs{L}\boxtimes\cs{L}$ is a morphism in $\CS\ab(G^2)$.
Similar work shows that $\gamma : m^*\cs{L} \to  \theta^*(m^*\cs{L})$ and $\vartheta: \cs{L}\boxtimes\cs{L} \to \theta^*(\cs{L}\boxtimes \cs{L})$ are also morphisms in $\CS\ab(G^2)$.
\end{proof}

Suppose $G$ is commutative, so $G\der = 1$. 
Suppose $(\cs{L},\mu,\beta)$ is an object in $\CS\ab(G)$.
Then $\beta : \cs{L}_1\to \EE$ is an isomorphism in $\CS(1)$, which is unique by \cite{cunningham-roe:13a}*{Theorem 3.9}.
Tracing through the construction of $\gamma : m^*\cs{L} \to \theta^*m^*\cs{L}$ from $\beta : \cs{L}_1\to \EE$, we find
that $\gamma : m^*\cs{L} \to \theta^*m^*\cs{L}$ is the canonical isomorphism coming from the equation $m = m \circ \theta$. 
Thus, when $G$ is commutative, Definition~\ref{def:CCS} agrees with Definition~\ref{def:CCScom}.
The next result generalizes this observation.

\begin{theorem}\label{thm:Gab}
Pull-back along the abelianization $q : G \to G\ab$
defines an equivalence of categories
\[
\CCS(G\ab) \to \CCS(G).
\]
\end{theorem}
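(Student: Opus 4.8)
The plan is to leverage the equivalence $\CS(G\ab)\to\CSab(G)$ already established in Proposition~\ref{prop:CSGabab} and simply check that it restricts to an equivalence on the full subcategories cut out by the commutativity condition. Since $\CCS(G\ab)$ is a full subcategory of $\CS(G\ab)=\CSab(G\ab)$ (using $G\ab\der=1$) and $\CCS(G)$ is a full subcategory of $\CSab(G)$, and pull-back along $q$ is an equivalence between $\CS(G\ab)$ and $\CSab(G)$, it suffices to show that an object $(\cs{L}\ab,\mu\ab)\in\CS(G\ab)$ is commutative if and only if its image $(\cs{L},\mu,\beta)=q\ab^*(\cs{L}\ab,\mu\ab)\in\CSab(G)$ is commutative. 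Fullness and faithfulness are then automatic, as is essential surjectivity.

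The heart of the argument is therefore a compatibility diagram: one must show that the square in Definition~\ref{def:CCS} for $(\cs{L},\mu,\beta)$ on $G\times G$ is the $q^2$-pullback of the square in Definition~\ref{def:CCScom} for $(\cs{L}\ab,\mu\ab)$ on $G\ab\times G\ab$. The morphisms $\mu$ and $\vartheta$ pull back transparently: $\mu=(q^2)^*\mu\ab$ by construction of $q^*$, and $\vartheta$ is defined stalkwise by the canonical commutativity isomorphism of the tensor product, so it is visibly the pullback of $\vartheta\ab$. The subtle point is the vertical map: I need to verify that the isomorphism $\gamma:m^*\cs{L}\to\theta^*m^*\cs{L}$ built from $\beta$ (via the elaborate commutator diagram involving $c$, $m'=i\circ m\circ\theta$, and $j\der\circ c = m\circ(m\times m')$) is the pullback along $q^2$ of the canonical isomorphism $\xi:m^*\cs{L}\ab\to\theta^*m^*\cs{L}\ab$ coming from the identity $m\ab=m\ab\circ\theta$ on the commutative group $G\ab$. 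The key geometric fact is that $q\circ c = 1$ (the commutator lands in $G\der=\ker q$), so $q\circ m\circ(m\times m')=q\circ j\der\circ c=1$, which forces all the $\beta$-dependent data in the construction of $\gamma$ to collapse, on the nose, to the tautological identification that one gets from $q\circ m = m\ab\circ q^2$ and $m\ab = m\ab\circ\theta\ab$.

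The main obstacle will be this last verification: untangling the definition of $\gamma$ and matching it against $(q^2)^*\xi$ requires carefully chasing the multi-arrow diagram defining $\gamma'$, tracking the canonical isomorphisms of functors on Weil sheaves, and using the observation already recorded in the excerpt (just before Theorem~\ref{thm:Gab}) that when $G$ is commutative $\gamma$ reduces to $\xi$ — here I am applying a relative version of that computation, with $G\ab$ in place of the trivial group. Concretely, I would first record that $q\der := q|_{G\der}$ is trivial and that $\beta = q\ab^*$ of the unique isomorphism $\cs{L}\ab_1\cong\EE$; then pull back the entire defining diagram of $\gamma'$ along $q^2$, observe that $c$ pulled back factors through the unit of $G\ab$, and conclude that $(q^2)^*\gamma = \xi$ after unwinding the canonical isomorphisms. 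I should also confirm, via Lemma~\ref{lemma:morinCSab}, that all four arrows in the square are morphisms in $\CSab(G^2)$, so that commutativity of the square as Weil-sheaf morphisms is equivalent to its commutativity as $\CSab(G^2)$-morphisms, which in turn (by Proposition~\ref{prop:CSGabab} applied to $G^2$) descends to the corresponding statement on $G\ab\times G\ab$.

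Once this compatibility is in hand, the theorem follows formally: $q\ab^*$ (equivalently, $q^*$ post-composed with the equivalence of Lemma~\ref{lemma:nubeta}) sends commutative character sheaves on $G\ab$ to commutative character sheaves on $G$ and, being an equivalence $\CS(G\ab)\xrightarrow{\sim}\CSab(G)$ that matches the defining squares, restricts to an equivalence on the full subcategories $\CCS(G\ab)\to\CCS(G)$. The only remaining bookkeeping is to note that essential surjectivity onto $\CCS(G)$ holds because any $(\cs{L},\mu,\beta)\in\CCS(G)\subseteq\CSab(G)$ is isomorphic to $q\ab^*(\cs{L}\ab,\mu\ab)$ for some $(\cs{L}\ab,\mu\ab)\in\CS(G\ab)$, and that source object is then commutative by the compatibility just established.
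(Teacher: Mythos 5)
Your proposal follows essentially the same route as the paper's proof: reduce via Proposition~\ref{prop:CSGabab} (together with Lemma~\ref{lemma:nubeta} and the identification $\CS\ab(G\ab)\simeq\CS(G\ab)$) to an equivalence on $\CSab$-level, then use Lemma~\ref{lemma:morinCSab} to place both defining squares in $\CSab(G^2)$ and $\CS(G\ab\times G\ab)$ and conclude from the key identification $\gamma=(q^2)^*\ab\,\xi$ that the square of Definition~\ref{def:CCS} commutes if and only if that of Definition~\ref{def:CCScom} does, whence the restriction to the full subcategories is an equivalence. Your added sketch of how to verify $\gamma=(q^2)^*\xi$ (via $q\circ c=1$ and a relative version of the commutative-case remark) is exactly the content the paper asserts more tersely, so the argument is correct and matches the paper's.
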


\begin{proof}
By definition, $\CCS(G)$ is a full subcategory of $\CS\ab(G)$; likewise, $\CCS(G\ab)$ is a full subcategory of $\CS\ab(G\ab)$.
We have just seen that $\CS\ab(G\ab)$ is equivalent to $\CS(G\ab)$.
By Proposition~\ref{prop:CSGabab}, pullback along the abelianization $q : G \to G\ab$ induces an equivalence $q^*\ab : \CS(G\ab)\to \CS\ab(G)$.
Thus, the functor $\CS\ab(G\ab)\to \CS\ab(G)$ induced by pullback along $q$ is an equivalence.
The functor $\CCS(G\ab)\to \CCS(G)$ under consideration is the restriction of $\CS\ab(G\ab)\to \CS\ab(G)$ to the subcategory $\CCS(G\ab)$. 
\[
\begin{tikzcd}
{} & \arrow{dl}[swap]{q^*\ab} \CS(G\ab)\\
\CS\ab(G) & \arrow{l} \CS\ab(G\ab) \arrow{u}[swap]{\text{equiv.}}\\
\CCS(G) \arrow[>->]{u} & \arrow{l} \CCS(G\ab) \arrow[>->]{u}
\end{tikzcd}
\]
To prove the theorem, it is now sufficient to show that $\CCS(G\ab)\to \CCS(G)$ is essentially surjective.
Suppose $(\cs{L},\nu,\beta)\in \CCS(G)$.
Then $(\cs{L},\nu,\beta)\in \CS\ab(G)$.
Let $(\cs{L}\ab,\mu\ab)\in \CS(G\ab)$ be given by the equivalences above.
Let $\xi : m\ab^*\cs{L}\ab \to \theta^* m\ab^*\cs{L}\ab$ be the isomorphism attached to $(\cs{L}\ab,\mu\ab)\in \CS(G\ab)$ as in Section~\ref{sec:comcom}.
Let $\gamma : m^*\cs{L} \to \theta^* m^*\cs{L}$ be the isomorphism attached to $\beta : \cs{L}\vert_{G\der} \to (\EE)_{G\der}$ as in Section~\ref{ssec:noncomdef}.
By Lemma~\ref{lemma:morinCSab}, the diagrams below are in $\CS(G\ab)$ (right) and $\CS\ab(G)$ (left).
\[
  \begin{tikzcd}[row sep=10, column sep = 20]
   m^*\cs{L} \arrow{dd}[swap]{\gamma} \arrow{r}{\mu} 
 & \cs{L}\boxtimes\cs{L} \arrow{dd}{\vartheta}
 &&&  m\ab^*\cs{L}\ab \arrow{dd}[swap]{\xi} \arrow{r}{\mu\ab} 
   & \cs{L}\ab\boxtimes\cs{L}\ab \arrow{dd}{\vartheta}\\
 && \  &\arrow{l}[swap]{(q^2)^*\ab}  &  &\\ 
   \theta^*(m^*\cs{L}) \arrow{r}{{\theta}^*\mu} 
&  \theta^*(\cs{L}\boxtimes\cs{L}) 
 &&&   \theta^*(m\ab^*\cs{L}\ab) \arrow{r}{{\theta}^*\mu\ab} 
 &  \theta^*(\cs{L}\ab\boxtimes\cs{L}\ab)
 \end{tikzcd}
\]
The diagram on the left is the result of applying the functor $(q^2)^*\ab$ to the one on the right; in particular $\gamma = (q^2)^*\ab \xi$.
Since $(q^2)^*\ab$ is an equivalence by Proposition~\ref{prop:CSGabab}, it follows that the diagram in Definition~\ref{def:CCS} commutes if and only if the diagram in Definition~\ref{def:CCScom} commutes. 
In other words, $(\cs{L},\mu,\beta)\in \CCS(G)$ if and only if $(\cs{L}\ab,\mu\ab)\in \CCS(G\ab)$.

\end{proof}

We may use Theorem~\ref{thm:Gab} to give a description of the morphisms and the isomorphism classes of objects in $\CCS(G)$. 

\begin{corollary}\label{cor:Gab}
The category $\CCS(G)$ is monoidal and there is a canonical isomorphism
\[
\CCSiso{G} \iso \Hom(G\ab(\Fq),\EEx).
\]
Every map in $\CCS(G)$ is either trivial or an isomorphism, and the automorphism group of any object in $\CCS(G)$ is canonically
isomorphic to $\Hom(\pi_0(\bar{G}\ab)_{\Frob{}},\EEx)$.
\end{corollary}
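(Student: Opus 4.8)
The plan is to deduce everything from Theorem~\ref{thm:Gab} together with the commutative case (Theorem~\ref{thm:trfrobiso}) and the automorphism computation of \cite{cunningham-roe:13a}*{Theorem 3.9}. Since $q^* : \CCS(G\ab) \to \CCS(G)$ is an equivalence of categories, it preserves and reflects all the categorical structure we wish to describe: monoidal structure, isomorphism classes of objects, the property that every morphism is trivial or an isomorphism, and automorphism groups. So the strategy is to establish each assertion for $\CCS(G\ab)$, where $G\ab$ is a smooth \emph{commutative} group scheme, and then transport along the equivalence.

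First I would address the monoidal structure. For commutative $G$, the tensor product $\cs{L}_1 \otimes \cs{L}_2$ of two character sheaves carries a natural multiplicativity isomorphism $\mu_1 \otimes \mu_2$, and the symmetry condition of Definition~\ref{def:CCScom} is preserved under tensor product (the cocycle of a tensor product is the product of the cocycles, and a product of symmetric cocycles is symmetric); the unit is the constant sheaf. This makes $\CCS(G\ab)$ a symmetric monoidal category, and one checks $q^*$ is monoidal, so $\CCS(G)$ inherits this structure. For the isomorphism-class statement, Theorem~\ref{thm:trfrobiso} gives $\TrFrob{G\ab} : \CCSiso{G\ab} \xrightarrow{\sim} G\ab(\Fq)^* = \Hom(G\ab(\Fq),\EEx)$, and since $q^*$ is an equivalence it induces a bijection $\CCSiso{G\ab} \iso \CCSiso{G}$; composing gives the canonical isomorphism $\CCSiso{G} \iso \Hom(G\ab(\Fq),\EEx)$. (One should note that this isomorphism is a group isomorphism with respect to the monoidal structure, which is where the previous paragraph is used.)

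Next I would handle the morphisms. It suffices to show every map in $\CCS(G\ab)$ is trivial or an isomorphism and to compute $\Aut$. Any morphism $\alpha : \cs{L} \to \cs{L}'$ in $\CS(G\ab)$ is in particular a morphism of rank-one local systems on a connected-on-components scheme; restricting to any connected component it is either zero or an isomorphism. The character-sheaf axioms force the behaviour to be uniform: a morphism of character sheaves that is an isomorphism on the identity component is an isomorphism everywhere, and one that vanishes on the identity component vanishes everywhere (this follows from compatibility with $\mu$, which ties the stalk at $g$ to the stalk at the identity via translation). Hence every morphism in $\CS(G\ab)$, a fortiori in $\CCS(G\ab)$, is trivial or an isomorphism, and the same holds in $\CCS(G)$ via the equivalence. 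For the automorphism group, an automorphism of an object of $\CCS(G\ab)$ is the same as an automorphism of the underlying object of $\CS(G\ab)$ that is additionally compatible with the extra data; but by \cite{cunningham-roe:13a}*{Theorem 3.9} the automorphism group of an object of $\CS(G\ab)$ is already $\Hom(\pi_0(\bar{G}\ab)_{\Frob{}},\EEx)$, and one checks every such automorphism is automatically compatible with $\mu$ and with the trivialization $\beta$ (for commutative groups $\beta$ is trivial data). Transporting along $q^*$ and using that $(G\ab)\ab = G\ab$, we get $\Aut(\cs{L}) \iso \Hom(\pi_0(\bar{G}\ab)_{\Frob{}},\EEx)$ for any object $\cs{L}$ of $\CCS(G)$.

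The main obstacle I anticipate is verifying that the monoidal structure is genuinely well defined on $\CCS$ and that $q^*$ is a monoidal functor compatibly with the identification in Theorem~\ref{thm:trfrobiso} --- in other words, checking that the bijection $\CCSiso{G} \iso \Hom(G\ab(\Fq),\EEx)$ is an isomorphism of groups and not merely of sets. This requires tracking the tensor product of character sheaves through the cocycle description $S_G$ and confirming that $\TrFrob{}$ intertwines tensor product with multiplication of characters, which is essentially the content of the function--sheaf dictionary but needs to be stated for the modified category. Everything else is a formal consequence of the equivalence in Theorem~\ref{thm:Gab} and the cited automorphism computation, so I expect the proof to be short, with the bulk of the (routine) work hidden in ``$q^*$ is a monoidal equivalence''.
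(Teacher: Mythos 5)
Your proposal is correct and follows essentially the same route as the paper: reduce to the commutative case via the equivalence $q^*:\CCS(G\ab)\to\CCS(G)$ of Theorem~\ref{thm:Gab}, use Theorem~\ref{thm:trfrobiso} for the identification of isomorphism classes, and quote \cite{cunningham-roe:13a}*{Theorem 3.9} for the automorphism groups (noting $(G\ab)\ab = G\ab$). The extra verifications you flag (monoidality of $\otimes$ and of $q^*$, and that morphisms are trivial or invertible) are exactly the routine points the paper leaves implicit in citing those theorems, and your sketches of them are sound.
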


\begin{proof}
The first claim follows from Theorems \ref{thm:trfrobiso} and \ref{thm:Gab}. Let us write $(\cs{L},\mu,\beta) \mapsto (\cs{L}\ab,\mu\ab)$ to indicate the equivalence appearing in Theorem~\ref{thm:Gab};
then \[\Aut_{\CCS(G)}(\cs{L},\mu,\beta) = \Aut_{\CCS(G\ab)}(\cs{L}\ab,\mu\ab).\] 
By \cite{cunningham-roe:13a}*{Theorem 3.9},  $\Aut_{\CCS(G\ab)}(\cs{L}\ab,\mu\ab) = \Hom(\pi_0(\bar{G}\ab)_{\Frob{}},\EEx)$.
\end{proof}

\subsection{Geometrizing characters trivial on the derived subgroup} \label{ssec:geo}

Corollary~\ref{cor:Gab} shows that commutative character sheaves on $G$ provide a natural geometrization of characters of $G\ab(k)$. In Theorem~\ref{thm:geo} we take this one small step further by exploring the relation between characters of $G(k)$ and objects in $\CCS(G)$. 

\begin{theorem}\label{thm:geo}
The trace of Frobenius $\TrFrob{} : \CCSiso{G}\to G(\Fq)^*$ fits into the following diagram,
\[
\begin{tikzcd}
\ & & \CCSiso{G\ab} \arrow{d}{\TrFrob{}}[swap]{\iso} \arrow{r}{\iso} & \CCSiso{G} \arrow{d}{\TrFrob{}}& \\
1 \arrow{r} & \Delta_G^* \arrow{r} & G\ab(k)^* \arrow{r} & G(k)^* \arrow{r} & G\der(k)^* \arrow{r} & 1,
\end{tikzcd}
\]
where $\Delta_G$ is the image of the connecting homomorphism $G\ab(\Fq) \to \Hh^1(\Fq, G\der)$. 
Thus the category $\CCS(G)$ geometrizes characters of $G(\Fq)$ in the following sense: for every group homomorphism $\chi : G(\Fq) \to \EEx$ that vanishes on $G\der(\Fq)$, there is an object $(\cs{L},\mu,\beta)$ in $\CCS(G)$ such that $\trFrob{\cs{L}} = \chi$. 
While the geometrization of $\chi$ is not unique, the isomorphism classes of possibilities are enumerated by $\Delta_G^*$.
\end{theorem}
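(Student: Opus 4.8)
The plan is to obtain the bottom row as the $\Hom(-,\EEx)$-dual of a segment of the Galois cohomology sequence of $1 \to G\der \to G \to G\ab \to 1$, and then to verify the right-hand square by unwinding the equivalence of Theorem~\ref{thm:Gab} and using that trace of Frobenius commutes with pullback. Once the square is known to be a map of exact sequences, every assertion of the theorem becomes a formal consequence.

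First I would record the exact sequence of pointed sets
\[
1 \to G\der(\Fq) \to G(\Fq) \to G\ab(\Fq) \xrightarrow{\ \delta\ } \Hh^1(\Fq, G\der),
\]
in which $\Delta_G = \image(\delta)$ by definition, the image of $G(\Fq) \to G\ab(\Fq)$ is $\ker\delta := \delta^{-1}(1)$, and $\ker\bigl(G(\Fq)\to G\ab(\Fq)\bigr) = G\der(\Fq)$. Since $G\ab$ is commutative, $\ker\delta$ is a subgroup of the abelian group $G\ab(\Fq)$, and the standard twisting argument identifies $\delta$ with the quotient onto a bijective copy of $\Delta_G$; thus $\Delta_G$ carries a group structure making
\[
1 \to \ker\delta \to G\ab(\Fq) \to \Delta_G \to 1
\]
a short exact sequence of abelian groups, while the first display gives $G(\Fq)/G\der(\Fq) \iso \ker\delta$. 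Since $\EEx$ is abelian, every $\chi\in G(\Fq)^*$ kills $G\der(\Fq)$ and hence factors uniquely through $G(\Fq)/G\der(\Fq)$; combined with the previous identification this yields a canonical isomorphism $G(\Fq)^* \iso (\ker\delta)^*$ under which the pullback $q^*\colon G\ab(\Fq)^* \to G(\Fq)^*$ becomes restriction along $\ker\delta\hookrightarrow G\ab(\Fq)$. Applying $\Hom(-,\EEx)$ to the short exact sequence above and using that $\EEx$ is divisible (being the group of units of an algebraically closed field), so $\Ext^1_\ZZ(\Delta_G,\EEx)=0$, I get the exact bottom row
\[
1 \to \Delta_G^* \to G\ab(\Fq)^* \to G(\Fq)^* \to 1.
\]
Divisibility of $\EEx$ does the work here, so no finiteness hypothesis on $G$ is needed.

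Next I would settle the square. Its top arrow is the equivalence $\CCS(G\ab)\to\CCS(G)$ of Theorem~\ref{thm:Gab}, hence a bijection on isomorphism classes; its left vertical arrow is an isomorphism by Theorem~\ref{thm:trfrobiso} applied to the smooth commutative group scheme $G\ab$ (smoothness of $G\ab=G/G\der$ holds because $q\colon G\to G\ab$ is a $G\der$-torsor and $G$ is smooth). Commutativity then reduces to the compatibility of trace of Frobenius with pullback: if $(\cs{L},\mu,\beta)$ is the image of $(\cs{L}\ab,\mu\ab,\beta\ab)$ then $\cs{L}=q^*\cs{L}\ab$, so for each $g\in G(\Fq)$ the stalk $\cs{L}_g$ is canonically $(\cs{L}\ab)_{q(g)}$, compatibly with the two Frobenius isomorphisms; hence $\trFrob{\cs{L}}(g)=\trFrob{\cs{L}\ab}(q(g))$, i.e.\ $\TrFrob{}(q^*\cs{L}\ab)=q^*\bigl(\TrFrob{}(\cs{L}\ab)\bigr)$.

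Finally I would read off the stated consequences by chasing the completed square. Composing the inverse of the top bijection with the left isomorphism $\TrFrob{}\colon\CCSiso{G\ab}\iso G\ab(\Fq)^*$ and the surjection $G\ab(\Fq)^*\twoheadrightarrow G(\Fq)^*$ shows that $\TrFrob{}\colon\CCSiso{G}\to G(\Fq)^*$ is surjective, so every $\chi\colon G(\Fq)\to\EEx$ equals $\trFrob{\cs{L}}$ for some $(\cs{L},\mu,\beta)\in\CCS(G)$; moreover the fibre of $\TrFrob{}\colon\CCSiso{G}\to G(\Fq)^*$ over $\chi$ is carried bijectively onto the fibre of $G\ab(\Fq)^*\to G(\Fq)^*$ over $\chi$, which is a coset of the kernel $\Delta_G^*$, hence a principal homogeneous space for $\Delta_G^*$. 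The only delicate points are bookkeeping: confirming that $G\ab$ is a smooth commutative group scheme so that Theorem~\ref{thm:trfrobiso} applies, and pinpointing where divisibility of $\EEx$ rescues exactness of the bottom row in the absence of a finiteness hypothesis. Everything else is formal, so I do not anticipate a genuine obstruction — the substance of the argument was already carried out in Theorems~\ref{thm:trfrobiso} and~\ref{thm:Gab}.
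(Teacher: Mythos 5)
Your proposal is correct and follows essentially the same route as the paper's proof: the bottom row is obtained by dualizing the exact sequence $1 \to G(\Fq)/G\der(\Fq) \to G\ab(\Fq) \to \Delta_G \to 1$ coming from the Galois cohomology of $1\to G\der\to G\to G\ab\to 1$ (using that characters of $G(\Fq)$ are taken to vanish on $G\der(\Fq)$ and that $\EEx$ is divisible), the square commutes because both horizontal maps are given by pullback along $q$, and the two isomorphisms are supplied by Theorem~\ref{thm:Gab} and Theorem~\ref{thm:trfrobiso} (via Corollary~\ref{cor:Gab}). The additional details you spell out --- the twisting argument giving $\Delta_G$ its group structure, the explicit $\Ext^1$-vanishing, and the stalkwise verification that trace of Frobenius commutes with $q^*$ --- are precisely what the paper leaves implicit, so there is no substantive divergence.
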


\begin{proof}
By the definition of $\Delta_G$, we have a short exact sequence
\[
1 \to G\der(k) \to G(k) \to G\ab(k) \to \Delta_G \to 1.
\]
Applying $\Hom(-, \EEx)$ yields the bottom row in the statement.

By Theorem~\ref{thm:Gab}, the map $\CCSiso{G\ab} \to \CCSiso{G}$ is an isomorphism.
Moreover, since both $\CCSiso{G\ab} \to \CCSiso{G}$ and $G\ab(k)^* \to G(k)^*$ are
defined by pullback along $q$, the square in the statement of the theorem commutes.
Finally, $\TrFrob{} : \CCSiso{G\ab} \to G\ab(k)^*$ is an isomorphism by Corollary \ref{cor:Gab}.
\end{proof}

\section{Geometrizing Characters Nontrivial on the Derived Subgroup} \label{sec:badchar}

If $\chi$ is a character of $G(k)$ vanishing on $G\der(k)$ then Theorem~\ref{thm:geo} shows that there is a \emph{local system} $\cs{L}$ such that $m^*\cs{L} \iso \cs{L}\boxtimes\cs{L}$ and $\TrFrob{}(\cs{L}) = \chi$.
In this section, we give methods for geometrizing characters that do not vanish on $G\der(k)$, though the results may not be local systems or conjugation-equivariant, a notion that we now define.

\subsection{Conjugation equivariant Weil sheaves}\label{ssec:equivariant}

A Weil sheaf $\cs{L}$ is said to be \emph{conjugation-equivariant} if $a^*\cs{L} \iso p_2^*\cs{L}$, where  $a : G\times G\to G$ is conjugation $a(x,y)=xyx^{-1}$ and $p_2: G\times G\to G$ is projection $p_2(x,y)=y$. 
If $(\cs{L},\mu)$ is a linear character sheaf on $G$, then $\cs{L}$ is conjugation-equivariant. Indeed,
the isomorphism $\mu : m^*\cs{L} \to \cs{L}\boxtimes\cs{L}$ determines an isomorphism $a^*\cs{L} \to p_2^*\cs{L}$ of Weil sheaves on $G\times G$ by the following diagram.
\[
\begin{tikzcd}[column sep = 30]
a^*\cs{L} \arrow[equal]{d}[swap]{a= m\circ (m\times (i\circ p_1))} \arrow[dotted]{r} & p_2^*\cs{L}  \\
(m\times (i\circ p_1))^* m^*\cs{L} \arrow{d}[swap]{(m\times (i\circ p_1))^*\mu}  & p_1^*(\cs{L} \otimes \cs{L}^\vee) \otimes p_2^*\cs{L} \arrow[equal]{u}[swap]{\cs{L}\otimes \cs{L}^\vee \iso (\EE)_{G}} \\
(m\times (i\circ p_1))^* ( \cs{L}\boxtimes\cs{L}) \arrow[equal]{d}{i\circ p_1 = p_2\circ(m\times(i\circ p_1))}[swap]{m= p_1\circ(m\times(i\circ p_1))}  & p_1^*\cs{L} \otimes p_2^*\cs{L} \otimes p_1^*\cs{L}^\vee \arrow[equal]{u} \\
m^*\cs{L}\otimes (i\circ p_1))^*\cs{L} \arrow{r}[swap]{\mu \otimes \id} & (\cs{L}\boxtimes\cs{L})\otimes (i\circ p_1)^*\cs{L} \arrow[equal]{u}
\end{tikzcd}
\]
Although every linear character sheaf on $G$ is a conjugation-equivariant Weil local system on $G$, the converse is certainly not true. 

\subsection{Lusztig's character sheaves}\label{ssec:LCS}

For connected reductive groups $G$ over $k$, Lusztig's character sheaves are simple perverse sheaves $\IC(C,\mathcal{L})$ on $G$ that come equipped with an isomorphism $a^*\, \IC(C,\mathcal{L}) \to p_2^*\, \IC(C,\mathcal{L})$ in the triangulated category $D^b_c(G\times G;\EE)$.
Consequently, when $\IC(C,\mathcal{L})$ is also a Weil sheaf complex, its trace of Frobenius function $\trFrob{\IC(C,\cs{L})} : G(k)\to \EE$ is a class function, called the characteristic function of $\IC(C,\cs{L})$.
By \cite{lusztig:86a}*{Cor. 25.7}, Weil character sheaves on $G$ determine a basis for the vector space of class functions on $G(k)$; in particular, any character of any representation of $G(k)$ can be expressed as a ${\bar \QQ}$-linear combination of characteristic functions of Weil character sheaves on $G$.
Let us use the notation $\mathsf{D}^b_{c,G}(G;\EE)$ for the category of conjugation-equivariant objects in $\mathsf{D}^b_{c}(G;\EE)$, as defined in \cite{bernstein-lunts:equivariant} for example, and $\mathsf{KD}^b_{c,G}(G;\EE)$ for the Grothendieck group of $\mathsf{D}^b_{c,G}(G;\EE)$; we set $\mathsf{K}_{\bar\QQ}\textsc{D}^b_{c,G}(G;\EE)\ceq \mathsf{KD}^b_{c,G}(G;\EE)\otimes_{\ZZ} {\bar\QQ}$.
Thus, Lusztig's result shows that for every representation $\rho$ of $G(k)$ there is some $\cs{F}\in \mathsf{K}_{\bar\QQ}\textsc{D}^b_{c,G}(G;\EE)$ such that $\trace\rho = \trFrob{\cs{F}}$.
This result is extended in the papers beginning with \cite{lusztig:disconnected1} to reductive groups $G$ over $k$ for which the geometric component group for $G$ is cyclic.

When specialized to the case of characters $\chi$ of $G(k)$, Lusztig's theory of character sheaves geometrizes $\chi$ using $\mathsf{K}_{\bar\QQ}\textsc{D}^b_{c,G}(G;\EE)$; of course, the resulting geometrization is generally not a local system.
For an example when this method can be used to geometrize a character of $G(k)$ which is not trivial on $G\der(k)$, consider $G = \SL_2$ over $\FFF$.
Then $G\der = G$, but $G(k)\der$ is the subgroup of semisimple elements,
isomorphic to the quaternion group $Q_8$ and of index $3$.
There is thus a nontrivial character $\chi : G(k) \to \mu_3 \subset \EEx$.  
Lusztig's character sheaves are constructed by perverse extension of local systems on a certain stratification of $G$.
In the case of $\SL_2$, there are five strata:
\vspace{0.1in}

\begin{center}
\begin{spacing}{1.3}
\begin{tabular}{|l|l|}
\hline
$C_3$ & regular semisimple elements \\
$C_2$ & regular unipotent elements \\
$C_2^-$ & $\{-u : \mbox{ regular unipotent $u$}\}$ \\
$\{1\}$ & $\{1\}$ \\
$\{2\}$ & $\{2\}$ \\
\hline
\end{tabular}
\end{spacing}
\end{center}


\noindent
Each Weil character sheaf on $\SL_2$ over $\FFF$ is the perverse extension of one of the following local systems: the rank-$1$ constant sheaf $\mathbf{1}_{C_3}$ (resp. $\mathbf{1}_{C_2}$, $\mathbf{1}_{C_2^-}$, $\mathbf{1}_{\{1\}}$,  $\mathbf{1}_{\{2\}}$) on $C_3$ (resp. $C_2$, $C_2^-$, $\{1\}$, $\{2\}$), or the non-trivial rank-$1$ local system $\cs{E}_{C_2}$  (resp.  $\cs{E}_{C_2^-}$) on $C_2^{-}$  (resp.  $\cs{E}_{C_2^-}$) trivialized by the double cover of $C_2$ (resp. $C_2^-$).
The following table describes these perverse sheaves by showing how they decompose in the Grothendieck group into standard sheaves on $\SL_2$, and also gives the values of the trace of Frobenius on the $7$ conjugacy classes in $\SL_2(\FFF)$.

\vspace{0.1in}

\begin{spacing}{1.3}
\[
\begin{array}{|l|l|ccccccc|}
\hline
\text{Perverse} & \text{Standard} & \multicolumn{7}{|c|}{\text{Trace of Frobenius values at conjugacy classes}} \\ \cline{3-9}
\text{Sheaf} & \text{Sheaves} & (\begin{smallmatrix}1 & 0 \\ 0 & 1\end{smallmatrix}) & (\begin{smallmatrix} 2 & 0\\ 0 & 2\end{smallmatrix}) & (\begin{smallmatrix}0 & 1 \\ 2 & 0\end{smallmatrix})  & (\begin{smallmatrix} 1 & 1\\ 0 & 1 \end{smallmatrix})  & (\begin{smallmatrix}1 & 2 \\ 0 & 1 \end{smallmatrix}) & (\begin{smallmatrix} 2 & 1\\ 0 & 2 \end{smallmatrix})  & (\begin{smallmatrix}2 & 2 \\ 0 & 2 \end{smallmatrix}) \\
\hline
\IC(\mathbf{1}_{C_3})  &  \mathbf{1}_{\SL_2}[3]  & -1 & -1 & -1 & -1 & -1 & -1 & -1 \\
 \IC(\mathbf{1}_{C_2})  &  \mathbf{1}_{C_2}[2] \oplus \mathbf{1}_{\{1\}}[2]  & 1 & 0 & 0 & 1 & 1 & 0 & 0 \\
 \IC(\cs{E}_{C_2})  &  \cs{E}_{C_2}[2]  & 0 & 0 & 0 &  \sqrt{-3}  &  -\sqrt{-3} & 0 & 0 \\
 \IC(\mathbf{1}_{C_2^-})  &  \mathbf{1}_{C_2^-}[2] \oplus \mathbf{1}_{\{2\}}[2]  & 0 & 1 & 0 & 0 & 0 & 1 & 1 \\
 \IC(\cs{E}_{C_2^-})  &  \cs{E}_{C_2^-}[2]  & 0 & 0 & 0 & 0 & 0 &   \sqrt{-3}  &   -\sqrt{-3}  \\
 \IC(\mathbf{1}_{\{1\}})  &  \mathbf{1}_{\{1\}}[0]  & 1 & 0 & 0 & 0 & 0 & 0 & 0 \\
 \IC(\mathbf{1}_{\{2\}})  &  \mathbf{1}_{\{2\}}[0]  & 0 & 1 & 0 & 0 & 0 & 0 & 0 \\
\hline
\end{array}
\]
\end{spacing}

\vspace{0.1in}
\noindent Using the trace of Frobenius values, we find that 
\[
- \IC(\mathbf{1}_{C_3}) - \frac32 \IC(\mathbf{1}_{C_2}) + \frac12 \IC(\cs{E}_{C_2}) - \frac32 \IC(\mathbf{1}_{C_2^-}) - \frac12 \IC(\cs{E}_{C_2^-}) + \frac32 \IC(\mathbf{1}_{\{1\}}) + \frac32 \IC(\mathbf{1}_{\{2\}})
\]
geometrizes $\chi$.  
Note that all seven character sheaves appear in this geometrization, even though we start just with a character; in particular, the geometrization of the character $\chi$ given by this method is not a local system.

We remark that for $p > 3$ and $G = \SL_2$, we have $G\der(k) = G(k)\der$.
In this case, therefore, any character of $G(k)$ may be geometrized by a local system using Theorem~\ref{thm:geo}, so Lusztig's character sheaves are not required.

\subsection{Pushforward along the Lang isogeny} \label{ssec:Lang_pushforward}

For connected, commutative groups, the standard method for geometrizing a character $\chi$ is to use the Lang isogeny $L : G \to G$, defined by $L(y) = y^{-1}\Frob{G}(y)$.  The pushforward $L_! (\EE)_G$ of the constant sheaf decomposes into a direct sum of rank $1$ local systems according to characters of the automorphism group $G(k)$ of the cover $L$, and the local system $\cs{L}$ associated to $\chi^{-1}$ will have $\TrFrob{}(\cs{L}) = \chi$.

If $G$ is nonabelian or disconnected, a similar strategy sometimes succeeds in geometrizing characters $\chi : G(k) \to \EEx$.
Define $\tilde{L} : G \to G$ by $\tilde{L}(y) = \Frob{G}(y)y^{-1}$.
The fibers of $L$ are right cosets of $G(k)$ and the fibers of $\tilde{L}$ are left cosets.
Suppose that
\begin{enumerate}
\labitem{(L.1)}{L.1} $G(k) \subset \image(\tilde{L}) \cap \image(L)$ (which will hold if $G$ is connected).
\end{enumerate}
Then we may associate to each $\chi$ a function $\tilde{\chi} : G(k) \to \EEx$ as folllows.
If $z \in G(k)$, write $z = \tilde{L}(y)$ and define $\tilde{\chi}(z) = \chi(L(y))$.
Note that $\tilde{\chi}$ is well defined since changing $y$ to $ya$ for $a \in G(k)$ has the
effect of conjugating $L(y)$ by $a^{-1}$, which has no effect on the value $\chi(L(y))$.
In order to obtain a local system with trace $\chi$, we must also assume that
\begin{enumerate}
\labitem{(L.2)}{L.2} $\tilde{\chi}$ is also a character.
\end{enumerate}

\begin{proposition}\label{prop:chitilde}
Let $G$ be a smooth group scheme over $k$ and assume \ref{L.1} and \ref{L.2}.  Let $\cs{L} = (L_!(\EE)_G)_{\tilde{\chi}^{-1}}$.  Then $\TrFrob{}(\cs{L}) = \chi$.
\end{proposition}
\begin{proof}
The stalk of $\cs{L}$ at $x \in G(k)$ is
\[
\cs{L}_x = \{ s : L^{-1}(x) \to \EE : s(ay) = \tilde{\chi}^{-1}(a) s(y) \mbox{ for all $a \in G(k), y \in L^{-1}(x)$}\}.
\]
For any $y \in L^{-1}(x)$, note that $\Frob{G}^{-1}(y) = yx^{-1}$ and $\tilde{L}(y) = yxy^{-1} \in G(k)$.  Frobenius acts on $s \in \cs{L}_x$ by
\[
(^{\Frob{}}s)(y) = s(\Frob{G}^{-1}(y)) = \tilde{\chi}^{-1}(yx^{-1}y^{-1})s(y) = \tilde{\chi}(\tilde{L}(y))s(y) = \chi(x)s(y),
\]
thus proving the claim.
\end{proof}

Note that the resulting sheaf is a local system, but is not necessarily conjugation-equivariant because
the Lang map is not a homomorphism when $G$ is nonabelian.

This method of geometrization also applies to the example considered above.
Let $G = \SL_2$ over $\FFF$ and let $\chi : G(k) \to \mu_3 \subset \EEx$ be a non-trivial character.
In this case, $\tilde{\chi}= \chi^{-1}$. 
Since $G$ is connected and $\tilde{\chi}$ is a character, Proposition~\ref{prop:chitilde} produces a local system $\cs{L}$ on $G$ which geometrizes $\chi$.
This $\cs{L}$ is not conjugation-equivariant.

The same method of geometrization applies to $G = \PGL_2$ when $k$ has odd characteristic.
In this case, $G\der = G$, but $G(k)\der = \PSL_2(k)$ has index $2$ in $G(k)$.
For the non-trivial character $\chi$ of $G(k)$, $\tilde{\chi} = \chi$, so again we may use Proposition~\ref{prop:chitilde} to geometrize $\chi$ as a local system. 

\section{Application to type theory for \texorpdfstring{$p$}{p}-adic groups}\label{sec:types}

We now show how to use Theorem~\ref{thm:geo} to geometrize Yu type data and how to geometrize types for supercuspidal representations of tamely ramified $p$-adic groups.

\subsection{Quasicharacters of smooth group schemes over certain Henselian traits}\label{ssec:quasicharacters}

Recall that $R$ is the ring of integers of a local field with finite residue field $k$.
The maximal idea of $R$ will be denoted by $\mathfrak{m}$. 
Let $\underline{G}$ be a smooth group scheme over $R$.
Here we shall use  \cite{bertapelle-gonzales:Greenberg} for the definition and fundamental properties of the Greenberg transform.
Let $G$ be the Greenberg transform of $\underline{G}$; then $G$ is a group scheme over $\Fq$ and there is a canonical isomorphism
\[
G(\Fq) = \underline{G}(R).
\]
Let $\varphi : \underline{G}(R) \to \EEx$ be a quasicharacter. 
By the continuity of $\varphi$ there is some $r \in \NN$ and a factorization
\[
\begin{tikzcd}
\underline{G}(R) \arrow{rr}{\varphi} \arrow{rd} && \EEx\\
& \underline{G}(R/\mathfrak{p}^{r+1}) \arrow{ru}[swap]{\varphi_r} 
\end{tikzcd}
\] 
The least such $r\in \NN$ is called the {\it level} of $\varphi$.
For any $r\in \NN$, define $R_r \ceq R/\mathfrak{p}^{r+1}$ and
set $G_r \ceq \Gr_r^{R}(\underline{G})$, the Greenberg transform of $\underline{G}\times_{\Spec{R}}\Spec{R_r}$.
Then $G_r$ is a smooth group scheme over $\Fq$ and $G_r(\Fq) = \underline{G}(R_r)$.

\begin{proposition}\label{prop:quasicharacters}
Let $\underline{G}$ be a smooth group scheme over $R$.
Let $\varphi : \underline{G}(R) \to \EEx$ be quasicharacter and let $r$ be the level of $\varphi$.
If $\varphi_r : G_r(k)\to \EEx$ is trivial on $G_{r,\operatorname{der}}(k)$ then 
there is a commutative character sheaf $\cs{L}_r\in \CCS(G_r)$ such that 
\[
\trFrob{\cs{L}_r} =  \varphi_r.
\] 
\end{proposition}

\begin{proof}
This is a direct consequence of Theorem~\ref{thm:geo}.
\end{proof}

With $\varphi$ as above, note that if $r'$ is any integer greater or equal to the level $r$ of $\varphi$, then the pullback of $\cs{L}_r$ along $G_{r'} \to G_{r}$ is a linear character sheaf on $G_r$. 
Thus, $\cs{L}_{r'}$ is conjugation-equivariant in the sense of Section~\ref{ssec:equivariant}.

Recall that the full Greenberg transform $G \ceq \Gr^{R}(\underline{G})$ is a group scheme over $\Fq$ such that $G(\Fq) = \underline{G}(R)$; it comes equipped with a morphism $G \to G_r$.
The Weil sheaf on $G$ obtained from $\cs{L}_m$ by pullback along the morphism of group schemes $G \to G_r$ is a quasicharacter sheaf on $G$, in the sense of \cite{cunningham-roe:13a}*{Def 4.2}, such that 
\[
\trFrob{\cs{L}} = \varphi.
\]

\subsection{Jacobi theory over finite fields}\label{ssec:Jacobi}

For use below, we recall some facts about the Heisenberg-Weil representation.

Let $V$ be a finite-dimensional vector space over a finite field $\Fq$ equipped with a symplectic paring $\langle\ ,\ \rangle : V\times V \to Z$, where $Z$ is a one-dimensional vector space over $\Fq$.
Let $V^\sharp$ be the Heisenberg group determined by $(Z, \langle\ ,\ \rangle)$ \cite{gurevich-hadani:07a}*{\S 1.1}.
Let $\Sp(V)$ be the symplectic group determined by the symplectic pairing $\langle\ ,\ \rangle$; this group acts on $V^\sharp$.
The group $\Sp(V)\ltimes V^\sharp$ is called the Jacobi group. 
From the construction above, it is clear that the Jacobi group may be viewed as the $\Fq$-points of an algebraic group over $\Fq$; we will refer to that algebraic group as the Jacobi group.

Let $\psi : Z \to \EEx$ be an additive character and let $\omega_\psi$ be the Heisenberg representation on $V^\sharp$ with central character $\psi$ \cite{gurevich-hadani:07a}*{\S 1.1}. 
The Heisenberg representation determines a representation $\pi_{\psi}$ of $\Sp(V)$ with the same representation space as $\omega_\psi$ and with the defining property: for each $g\in \Sp(V)$, $\pi_\psi(g)$ determines an isomorphism of representations $\omega_\psi^g \to \omega_\psi$.
Let $W_\psi = \pi_\psi \ltimes \omega_\psi$ be the Heisenberg-Weil representation of the Jacobi group $\Sp(V)\ltimes V^\sharp$ given by $\omega_\psi$ and $\pi_\psi$ \cite{gurevich-hadani:07a}*{\S 2.2}.

By \cite{gurevich-hadani:07a}*{Theorem 3.2.2.1} (see also \cite{gurevich-hadani:11a}*{Theorem 4.5}), there is a Frobenius-stable conjugation-equivariant perverse sheaf $\cs{K}_\psi$ on $\Sp(V)\ltimes V^\sharp$  such that 
\begin{equation}\label{eqn:Jacobi}
\trFrob{\cs{K}_\psi} = \trace(W_\psi).
\end{equation}
In particular, this geometrization uses $\mathsf{D}^b_{c,\Sp(V)\ltimes V^\sharp}(\Sp(V)\ltimes V^\sharp;\EE)$, as recalled in Section~\ref{ssec:LCS}, to geometrize $\trace(W_\psi)$. 

\subsection{Review of Yu's types and associated models}\label{ssec:review}

For the rest of Section~\ref{sec:types}, $R$ is the ring of integers of a $p$-adic field.
A Yu type datum $(\oK^i,\orho^0,\varphi^i,d)$ consists of the following:
\begin{enumerate}
\labitem{Y0}{Y0} a sequence of compact groups $\oK^0 \subseteq \oK^1 \subseteq \cdots \subseteq \oK^d = \oK$;
\labitem{Y1}{Y1} a continuous representation $\orho^0$ of $\oK^0$;
\labitem{Y2}{Y2} quasicharacters $\varphi^i : \oK^i \to \CC^\times$, for $i=0, \ldots d$.
\end{enumerate}
The representation $\orho^0$ and the quasicharacters $(\varphi^0, \ldots , \varphi^d)$ enjoy certain properties which allow Yu to construct a sequence of types $(\oK^i,\orho_i)$, for $i=1, \ldots, d$.
In order to prepare for the construction of the geometric types of Theorem~\ref{thm:geotypes}  we review some further detail here.
In Table~\ref{table:notation} we explain how to convert the constructions appearing in this section into the notation of \cite{yu:01a}.

First, Yu introduces 
\begin{enumerate}
\labitem{Y3}{Y3}
compact groups $J_i\subset \oK$, for $i=0, \ldots d$, such that 
$
\oK^i = J_0\cdots J_{i}
$ 
and, for $i=0, \ldots d-1$, a natural action of $\oK^i$ on $J_{i+1}$ defining the groups $\oK^i \ltimes J_{i+1}$.
\begin{equation}\label{eq:semiprod}
\begin{tikzcd}
\ && 1 \arrow{d} && \\
\ && \oK^{i}\cap J_{i+1}\arrow{d} && \\
1 \arrow{r} & J_{i+1} \arrow{r} & \oK^i \ltimes J_{i+1} \arrow{d}{\pi_{i+1}} \arrow{r}{p_i} & \oK^i \arrow{r} & 1\\
&& \oK^{i+1} \arrow{d}{} && \\
&& 1 &&
\end{tikzcd}
\end{equation}
\end{enumerate}

Next, Yu defines a group homomorphism (in fact, a quotient) 
$
J_{i+1} \to V_{i+1}
$
where $V_{i+1}$ is a finite abelian group, the latter also given the structure of a $\Fq$-vector space.
The vector space $V_{i+1}$ is then equipped with a symplectic pairing $\langle\ ,\ \rangle_{i+1} : V_{i+1}\times V_{i+1} \to Z_{i+1}$, where $Z_{i+1}$ is a one-dimensional vector space over $\Fq$, itself equipped with an additive character $\psi_{i+1} : Z_{i+1} \to \CC^\times$.
This, in turn, is used to define a map
$
J_{i+1} \to V_{i+1}^\sharp,
$
where $V_{i+1}^\sharp$ is the Heisenberg group determined by $V_{i+1}$, $Z_{i+1}$, $\langle\ ,\ \rangle_{i+1}$ and $\psi_{i+1}$, as in Section~\ref{ssec:Jacobi}.
In fact, the quotient $J_{i+1} \to V_{i+1}^\sharp$ factors through a quotient $J_{i+1} \to H_{i+1}$ and an isomorphism $j_{i+1} : H_{i+1} \to V_{i+1}^\sharp$, where $H_{i+1}$ is a Heisenberg $p$-group in the sense of \cite{yu:01a}*{}.
Finally, Yu constructs a group homomorphism $f_{i+1} : \oK^i \to \Sp(V_{i+1})$ such that the pair $(f_{i+1}, j_{i+1})$ is a symplectic action of $\oK^i$ on $H_{i+1}$ in the sense of \cite{yu:01a}.
Taken together, this defines
\begin{enumerate}
\labitem{Y4}{Y4}  
a group homomorphism $h_{i+1} : \oK^i \ltimes J_{i+1} \to  \Sp(V_{i+1})\ltimes V_{i+1}^\sharp$ making the following diagram commute.
\[
\begin{tikzcd}
1 \arrow{r} & J_{i+1} \arrow{d} \arrow{r} & \oK^i \ltimes J_{i+1} \arrow[dashed]{d}{h_{i+1}} \arrow{r}{p_i} & \oK^i \arrow{r} \arrow{d}{f_i} & 1\\ 
1 \arrow{r} & V_{i+1}^\sharp \arrow{r} & \Sp(V_{i+1}) \ltimes V_{i+1}^\sharp \arrow{r} & \Sp(V_{i+1}) \arrow{r} & 1
\end{tikzcd}
\]
\end{enumerate}

\begin{table}[ht]
\caption{Notation conversion chart.}
\begin{spacing}{1.3}
\begin{tabular}{| c|l | l | }
\hline
\text{this paper} & \text{Jiu-Kang Yu, {\it Construction of tame}} & \cite{yu:01a} \\
 & \text{{\it supercuspidal representations}} &  \\
\hline
$\oK^0$ & $\,^\circ K^0 = G^0(F)_y$ & \cite{yu:01a}*{\S 15} \\
$\oK^{i+1}$ & $\,^\circ K^{i+1} = (\,^\circ K^0) \vec{G}^{(i+1)}(F)_{y,(0, s_0, \ldots, s_{i})}$ & \cite{yu:01a}*{\S 15} \\
$\orho^0$ & $\,^\circ \rho^0$ & \cite{yu:01a}*{\S 15} \\
$\orho_{i+1}$ & $\,^\circ \rho_{i+1}$ &  \cite{yu:01a}*{\S 15} \\
$\varphi^i$ & $\phi_i\vert_{\,^\circ K^i }$ & \cite{yu:01a}*{\S 3}\\
$J_{i+1}$ & $J^{i+1} = (G^i,G^{i+1})(F)_{y, (r_i, s_i)}$ & \cite{yu:01a}*{\S 3} \\
$V_{i+1}$ & $J^{i+1}/J^{i+1}_+ = (G^i,G^{i+1})(F)_{y, (r_i, s_i)}/ (G^i,G^{i+1})(F)_{y, (r_i, s_i^+)}$ & \cite{yu:01a}*{\S 3} \\ 
$V_{i+1}^\sharp$ & $(G^i,G^{i+1})(F)_{y, (r_i, s_i)}/ \ker(\widehat{\phi}_i\vert_{(G^i,G^{i+1})(F)_{y, (r_i, s_i^+)}})$ &  \cite{yu:01a}*{\S 4} \\
$Z_{i+1}$ & $\ker(V_{i+1}^\sharp\to V_{i+1})$ & \cite{yu:01a}*{\S 11} \\
$(f_{j+1}, j_{i+1})$ & $(f,j)$ & \cite{yu:01a}*{\S 11} \\
$\langle\ ,\ \rangle_{i+1}$ & $\langle\ ,\ \rangle$ & \cite{yu:01a}*{\S 11} \\
\hline
\end{tabular}
\end{spacing}
\label{table:notation}
\end{table}%

We can now recall how Yu uses all this to construct the types $(\oK^i,\orho_i)$; see \cite{yu:01a}*{\S\S 4, 15}.
The representations $\orho_i$ are defined recursively.
For the base case $i=0$, set $\orho_0 \ceq \orho^0\otimes \varphi^0$; see \ref{Y1} above.
Now fix $i$.
Let $W_{i+1}$ be the Heisenberg-Weil representation of the Jacobi group $\Sp(V_{i+1})\ltimes V_{i+1}^\sharp$, whose restriction to $V_{i+1}^\sharp$ has central character $\psi_{i+1}$.
Pull-back along $h_{i+1}$ to form $h_{i+1}^*(W_{i+1})$, a representation of $\oK^i \ltimes J_{i+1}$.
Write $\inf(\orho_i)$ for the representation of $\oK^i \ltimes J_{i+1}$ obtained by pulling back $\orho_i$ along $\oK^i \ltimes J_{i+1} \to \oK^i$. 
Consider the representation
\begin{equation}\label{eq:tensor}
\orho^{i+1} \ceq h_{i+1}^*(W_{i+1}) \otimes \inf(\orho_i)
\end{equation}
of $\oK^i \ltimes J_{i+1}$.
By \cite{yu:01a}*{}, the representation $\orho^{i+1}$ of $\oK^i \ltimes J_{i+1}$ is trivial on $\oK^{i}\cap J_{i+1}$ so $\orho^{i+1}$ descends to $\oK^{i+1}$. 
Set $\orho_{i+1} = \orho^{i+1}\otimes \varphi^{i+1}$.
This completes the recursive definition of the Yu $(\oK^i,\orho_i)$ for $i=0, \ldots , d$.
By \cite{Yu:models}*{Prop 10.2} there is a sequence  
\[
\underline{G}^0 \to \underline{G}^1 \to \cdots \to \underline{G}^d = \underline{G}
\]
of morphisms of affine smooth group schemes of finite type over $R$ such that, on $R$-points it gives the sequence $\oK^0 \subseteq \oK^1 \subseteq \cdots \subseteq \oK^d$ above.
Indeed, this is the main result of \cite{Yu:models}.

As explained in \cite{Yu:models}*{\S 10.4}, there is morphism of affine smooth group schemes of finite type over $R$ 
\[
\underline{J}^i \to \underline{G},
\] 
for each $i=0,\ldots d$, such that $\underline{J}^i(R) = J_i$ as a subgroup of $C$ and such that the image of the $R$-points under the multiplication map $\underline{J}^0 \times \cdots \times \underline{J}^i \to \underline{G}$ is $\oK^i$, for $i=0, \ldots , d$.
There is a natural action of $\underline{G}^i$ on $\underline{J}^{i+1}$ in the category of smooth affine group schemes over $R$ so that the group scheme
\[
\underline{G}^i \ltimes \underline{J}^{i+1}
\]
gives $(\underline{G}^i \ltimes \underline{J}^{i+1})(R) = \oK^i \ltimes J_{i+1}$

\newcommand{\reductive}{{\operatorname{red}}}

Write $\underline{J}^{i+1}_\Fq$ for the special fibre $\underline{J}^{i+1}\times_{\Spec{R}} \Spec{k}$ of $\underline{J}^{i+1}$. 
The vector space $V_{i+1}$ may realized as the $\Fq$-points on a variety $V^{i+1}$ over $\Fq$, where $V^{i+1}$, appears as a quotient $\underline{J}^{i+1}_{\Fq} \to V^{i+1}$ of algebraic groups over $\Fq$. Then the quotient $J_{i+1} \to V_{i+1}$ is realized as the composition
\[
\underline{J}^{i+1}(R) \to \underline{J}^{i+1}(\Fq) = \underline{J}^{i+1}_\Fq(\Fq) \to V^{i+1}(\Fq) = V_{i+1}.
\]
Likewise, the Heisenberg $p$-group $H_{i+1}$, appearing in \ref{ssec:review}, may be realized as a quotient of algebraic groups, and $\underline{J}^{i+1}_{\Fq} \to H^{i+1}$ as the composition 
\[
\underline{J}^{i+1}(R) \to \underline{J}^{i+1}(\Fq) = \underline{J}^{i+1}_\Fq(\Fq) \to H^{i+1}_{\Fq}(\Fq) = H_{i+1}.
\]
Finally, the group homomorphism $f_i : J_0\cdots J_i \to \Sp(V_{i+1})$ may be made geometric in much the same way. 
Writing $\underline{G}^{i}_\Fq$ for the special fibre $\underline{G}^{i}\times_{\Spec{R}} \Spec{k}$ of $\underline{G}^{i}$, and writing $\underline{G}^{i,\reductive}_\Fq$ for the reductive quotient of $\underline{G}^{i}_\Fq$, there is a quotient of algebraic groups $\underline{G}^{i,\reductive}_\Fq \to W^{i+1}_\Fq$ so that $f_i : J_0\cdots J_i \to \Sp(V_{i+1})$ is realized as the composition
\[
\underline{G}^{i}(R) \to \underline{G}^{i}(\Fq) = \underline{G}^{i}_\Fq(\Fq) \to  \underline{G}^{i,\reductive}_\Fq(\Fq) \to W^{i+1}_{\Fq}(\Fq) = \Sp(V_{i+1}).
\]

With all this, we may revisit the quotients appearing in Section~\ref{ssec:review}:
\[
\begin{tikzcd}
1 \arrow{r} & \underline{J}^{i+1}  \arrow{r} & \underline{G}^i \ltimes \underline{J}^{i+1} \arrow{r} & \underline{G}^i \arrow{r}  & 1\\
1 \arrow{r} & \underline{J}^{i+1}_\Fq \arrow{d} \arrow{u} \arrow{r} & \underline{G}_\Fq^i \ltimes \underline{J}_\Fq^{i+1} \arrow{d} \arrow{u} \arrow{r} & \underline{G}_\Fq^i \arrow{r} \arrow{d} \arrow{u} & 1\\ 
1 \arrow{r} & V_{i+1}^\sharp \arrow{r} & \Sp(V_{i+1}) \ltimes V_{i+1}^\sharp \arrow{r} & \Sp(V_{i+1}) \arrow{r} & 1,
\end{tikzcd}
\]
where the last two rows are now understood as forming a diagram in the category of algebraic groups over $\Fq$. 
This realizes the Jacobi group $\Sp(V_{i+1}) \ltimes V_{i+1}^\sharp$ as a quotient of the special fibre of the smooth group scheme $\underline{G}^i \ltimes \underline{J}^{i+1}$ over $R$. 

We may now revisit the ingredients in the construction of the representation $\rho$ of $\underline{G}(R)$ along the lines indicated by Yu and recalled in Section~\ref{ssec:review}.
\begin{enumerate}
\labitem{M0}{M0}
The compact groups $\oK^i$ appearing in \ref{Y0} have been replaced by the smooth group schemes $\underline{G}^i$.
\labitem{M1}{M1}
The continuous representation $\orho^0$ of $\oK^0$ appearing in \ref{Y1} may be interpreted as a representation of $\underline{G}^0(R)$ obtained by inflation along $\underline{G}^0(R) \to \underline{G}^0(\Fq)$ from a representation $\varrho_0$ of $\underline{G}^0(\Fq) = \underline{G}^0_\Fq(\Fq)$.
In fact, $\varrho_0$ is itself obtained by pulling back a representation $\varrho_0^\reductive$ along the $\Fq$-points of the quotient $\underline{G}^0_\Fq \to (\underline{G}^0)_\Fq^\reductive$.
\labitem{M2}{M2} The quasicharacters $\varphi^i$ appearing in \ref{Y2} are now quasicharacters of $\underline{G}^i(R)$, for $i=0, \ldots, d$.
In fact, if $r_i$ is the level of $\varphi^i$, as it appears in Section~\ref{ssec:quasicharacters}, then $\varphi^i$ is obtained by pulling back a quasicharacter $\varphi_i\ceq \varphi^i_{r_i}$ of the smooth group scheme $G_i\ceq G^i_{r_i} \ceq \Gr_{r_i}^{R}(\underline{G}^i)$ along $G^i \to G_i$.
\labitem{M3}{M3}
Diagram \eqref{eq:semiprod} in \ref{Y3} is now replaced by the following diagram of smooth group schemes over $R$.
\begin{equation}\label{eq:pimodel}
\begin{tikzcd}
\ && 1 \arrow{d} && \\
\ && \underline{G}^{i}\times_{\underline{G}} \underline{J}^{i+1}\arrow{d} && \\
1 \arrow{r} & \underline{J}^{i+1} \arrow{r} & \underline{G}^i \ltimes \underline{J}^{i+1} \arrow{d} \arrow{r} & \underline{G}^i \arrow{r} & 1\\
&& \underline{G}^{i+1} \arrow{d} && \\
&& 1 &&
\end{tikzcd}
\end{equation}
\labitem{M4}{M4}
The representation $h_{i+1}^*(W_{i+1})$ appearing in \ref{Y4} is now obtained by pulling back a representation along 
\[
(\underline{G}^i \ltimes \underline{J}^{i+1})(R) \to (\underline{G}^i \ltimes \underline{J}^{i+1})(\Fq).
\]
Let $w_{i+1}$ be that representation of $(\underline{G}^i \ltimes \underline{J}^{i+1})(\Fq) = (\underline{G}_\Fq^i \ltimes \underline{J}^{i+1}_\Fq)(\Fq)$. 
Then $w_{i+1}$ is itself obtained by pulling back the representation $W_{i+1}$ along the $\Fq$-points of the quotient
\[
\underline{G}_\Fq^i \ltimes \underline{J}^{i+1}_\Fq \to 
\Sp(V_{i+1}) \ltimes V_{i+1}^\sharp.
\]
\end{enumerate}

This brings us back to the point made in \cite{Yu:models}*{\S 10.5} as quoted in the Introduction to this paper.

\subsection{Geometrization of characters of certain types}\label{ssec:geotypes}

We may now give the main result of Section~\ref{sec:types}, Theorem~\ref{thm:geotypes}.
Since Yu's theory refers to complex representations, and since our geometrization uses $\ell$-adic sheaves, we grit our teeth and fix an isomorphism $\CC \approx \EE$.

As we recalled in Section~\ref{ssec:review}, a Yu type datum consists of compact groups $\oK^i$, a representation $\orho^0$ of $\oK^0$ and quasicharacters $\varphi^i$ of $\oK^i$, for $i=0,\ldots, d$; see \ref{Y0}, \ref{Y1} and \ref{Y2}.
In Section~\ref{ssec:review} we also saw how a Yu type datum determines smooth group schemes $\underline{G}^i$, a representation $\varrho_0\red$ of the reductive quotient $(\underline{G}^0)\red_{k}$ of the special fibre $\underline{G}^0_{k}$ of $\underline{G}^0$, and quasicharacters $\varphi_i$ of the group of $k$-rational points on the level-$r_i$ Greenberg transform $G_i$ of $\underline{G}^i$; see \ref{M0}, \ref{M1} and \ref{M2}.
Theorem~\ref{thm:geotypes} places the following conditions on the Yu type datum:
\begin{enumerate}
\labitem{H0}{H0} the geometric component group of $(\underline{G}^0)\red_{k}$ is cyclic;
\labitem{H1}{H1} for each $i=0,\ldots, d$, either
\begin{enumerate}
\labitem{H1(a)}{H1a} $G_i$ is reductive with cyclic geometric component group, or
\labitem{H1(b)}{H1b} the quasicharacter $\varphi_i$ of $G_i$ is trivial on  $G_{i,\operatorname{der}}(k)$.
\end{enumerate}
\end{enumerate}
Hypothesis~\ref{H0} will allow us to use Lusztig's character sheaves to geometrize the character of the representation of $\orho^0$.
Hypothesis~\ref{H1a} may be used to geometrize each quasicharacter $\varphi_i$ using $\mathsf{K}_{\bar\QQ}\mathsf{D}^b_{c,G_i}(G_i;\EE)$.
Alternatively, Hypothesis~\ref{H1b} allows us to use Proposition~\ref{prop:quasicharacters} to geometrize each quasicharacter $\varphi_i$ using local systems on $G_i$, or more precisely, using commutative character sheaves on $G_i$.
Using Sections~\ref{ssec:equivariant} and \ref{ssec:LCS} we see that, in both cases, the resulting geometrization may be interpreted as an element in $\mathsf{K}_{\bar\QQ}\mathsf{D}^b_{c,G_i}(G_i;\EE)$.
Note that when Hypothesis~\ref{H1b} applies, it provides a considerably simpler geometrization than when Hypothesis~\ref{H1a} applies.

We remark that if Lusztig's theory of character sheaves can be generalized to all disconnected reductive algebraic groups, then Hypothesis~\ref{H0} can be removed and Hypothesis~\ref{H1a} can be replaced with the hypothesis that each $G_i$ is reductive.

\begin{theorem}\label{thm:geotypes}
Let $(\oK^i, \orho^0, \varphi^i,d)$ be a Yu type datum that satisfies Hypotheses~\ref{H0} and \ref{H1}, and let $\orho_i$ be the representation of $\oK^i$ constructed from it in Section~\ref{ssec:review}.  Then for each $i=0,\dots, d$ there is an element $\cs{F}_i \in \mathsf{K}_{\bar\QQ}\mathsf{D}^b_{c,G^i}(G^i;\EE)$ such that
\[
\trFrob{\cs{F}_i} = \trace(\orho_i).
\]
\end{theorem}

\begin{proof}
Recall that $G^i(\Fq) = \underline{G}^i(R) = \oK^i$, canonically.
Let $r_i$ be the level of $\varphi^i$ as defined in Section~\ref{ssec:quasicharacters}.
Set $r = \max\{ r_i \tq i=0, \ldots ,d\}$.

By \cite{lusztig:disconnected1}, there is $A \in \mathsf{K}_{\bar\QQ}\mathsf{D}^b_{c,(\underline{G}^0)_\Fq\red}((\underline{G}^0)_\Fq\red;\EE)$ such that 
\[
\trFrob{A} = \trace \varrho_0^\reductive.
\]
This uses Hypothesis~\ref{H0}.
Let $A^0\in \mathsf{K}_{\bar\QQ}\mathsf{D}^b_{c,(\underline{G}^0)_\Fq}((\underline{G}^0)_\Fq;\EE)$ be obtained by pullback along the quotient $(\underline{G}^0)_\Fq \to (\underline{G}^0)_\Fq^\reductive$.
Then 
\[
\trFrob{A^0} = \trace \varrho_0.
\]
The special fibre $(\underline{G}^0)_\Fq$ of the smooth group scheme $\underline{G}^0$ is itself a smooth group scheme, and may be identified with the Greenberg transform $Q^0 = \Gr^{R}_0(\underline{G}^0)$ \cite{cunningham-roe:13a}*{\S 4.3}. 
With $r\in \NN$ as above, let ${A}_r^0$ be the equivariant Weil sheaf on the algebraic group $G_r^i$ obtained by pull-back from $A^0$ along the affine morphism $G_r^i \to Q^0$.
Factor
\begin{equation}\label{eqn:tracerho}
\begin{tikzcd}
G^0(\Fq) \arrow{rr}{\trace(\orho^0)} \arrow{rd} && \EE \\
& G_r^0(\Fq) \arrow{ru}[swap]{\trace(\orho^0)_r} 
\end{tikzcd}
\end{equation}
Observe that $\trace(\orho^0)_r$ may be recovered from ${A}_m^0$:
\[
\trFrob{{A}^0_r} = \trace(\orho^0)_r
\]

Consider the Jacobi group $\Sp(V_{i+1})\ltimes V_{i+1}^\sharp$ and the Heisenberg-Weil representation $W_{i+1}$ appearing in Section~\ref{ssec:review}.
Let $\cs{K}^{i+1}$ be the conjugation equivariant Weil sheaf on the Jacobi group, recalled in Section~\ref{ssec:Jacobi}, such that
\[
\trFrob{\cs{K}^{i+1}} = \trace(W_{i+1}).
\]
Recall from Section~\ref{ssec:review} that $\Sp(V_{i+1})\ltimes V_{i+1}^\sharp$ is a quotient of the special fibre of the smooth group scheme $\underline{G}^{i} \ltimes \underline{J}^{i+1}$.
Let $\cs{K}_0^{i+1}$ be the Weil sheaf on the special fibre of $\underline{G}^{i} \ltimes \underline{J}^{i+1}$ obtained from $W_{i+1}$ by pullback. 
Let $\cs{K}_r^{i+1}$ be the equivariant Weil sheaf on $\Gr^{R}_r(\underline{G}^{i} \ltimes \underline{J}^{i+1})$ obtained from $\cs{K}_0^{i+1}$ by pullback along the affine morphism
$\Gr^{R}_r(\underline{G}^{i} \ltimes \underline{J}^{i+1}) \to \Gr^{R}_0(\underline{G}^{i} \ltimes \underline{J}^{i+1})$.

We now define $\cs{A}^i_r  \in \mathsf{K}_{\bar\QQ}\mathsf{D}^b_{c,G^i_r}$ for $i=0,\ldots ,d$ recursively, following the construction of the representations $\orho^i$, as reviewed in Section~\ref{ssec:review}.
First, set $\cs{A}_r^0 = A_r^0$ and note that \eqref{eqn:tracerho} commutes with $\trace(\orho^0)_r$ replaced by $\trFrob{\cs{A}_r^0}$.
Using Hypothesis~\ref{H1}, let $\cs{L}_i$ be the geometrization of the quasicharacter $\varphi_i \ceq \varphi^i_{r_i}$ appearing in \ref{M2}.
If \ref{H1a} applies, then $\cs{L}_i \in \mathsf{K}_{\bar\QQ}\mathsf{D}^b_{c,G_i}(G_i;\EE)$, using \cite{lusztig:disconnected1};
if \ref{H1b} applies, then $\cs{L}_i\in \CCS(G_i)$, using Proposition~\ref{prop:quasicharacters}.
In either case, $\cs{L}_i\in \mathsf{K}_{\bar\QQ}\mathsf{D}^b_{c,G_i}(G_i;\EE)$ and
\[
\trFrob{\cs{L}_i} = \varphi_i.
\]
For each $i$, let $\cs{L}^i_{r}$ be the pull-back of $\cs{L}_i$ along $G^i_{r} \to G_i$; then $\cs{L}^i_{r}$ is a linear character sheaf and
\[
\trFrob{\cs{L}^i_{r}} = \varphi^i_{r}.
\]
Now, suppose $\cs{A}^i_r$ on $G_r^i$ is defined such that
\[
\begin{tikzcd}
G^i(\Fq) \arrow{rr}{\trace(\orho^i)} \arrow{rd} && \EE\\
& G_r^i(\Fq) \arrow{ru}[swap]{\trFrob{\cs{A}_r^i}} & 
\end{tikzcd}
\]
commutes.
Applying the Greenberg functor $\Gr^{R}_r$ to \eqref{eq:pimodel} gives
\begin{equation}\label{eq:pi}
\begin{tikzcd}
\ && 1 \arrow{d} && \\
\ && G_r^{i}\times_{G_r} J_r^{i+1} \arrow{d} && \\
1 \arrow{r} & J_r^{i+1} \arrow{r} & G_r^i \ltimes J_r^{i+1} \arrow{d}{\pi_r^{i+1}} \arrow{r}{p_r^{i}} & G_r^i \arrow{r} & 1\\
&& G_r^{i+1} \arrow{d} && \\
&& 1 && 
\end{tikzcd}
\end{equation}
where $J_r^{i+1} \ceq \Gr^{R}_r(\underline{J}^{i+1})$ and $G_r^{i} \ceq \Gr^{R}_r(\underline{G}^{i})$.
By \cite{bertapelle-gonzales:Greenberg}*{Prop 14.2}, the sequences are exact.
Consider $\cs{B}_r^{i+1}\in \mathsf{K}_{\bar\QQ}\mathsf{D}^b_{c,G_r^{i}\ltimes J_r^{i+1}}(G_r^{i}\ltimes J_r^{i+1};\EE)$ defined by
\[
\cs{B}_r^{i+1} \ceq \cs{K}_r^{i+1} \otimes (p_r^{i})^*(\cs{A}_r^{i}\otimes \cs{L}_r^{i}).
\]
Comparing with \eqref{eq:tensor}, we see that $\trFrob{\cs{B}_r^{i+1}}$ is precisely the function obtained by factoring the character of $\orho^{i+1}$ through $(\underline{G}^{i}\ltimes \underline{J}^{i+1})(R) \to (\underline{G}^{i}\ltimes \underline{J}^{i+1})(R_r)$ using the canonical identification $(G_r^{i}\times_{G_r} J_r^{i+1})(\Fq) =  (\underline{G}^{i}\ltimes \underline{J}^{i+1})(R_r)$. 
In particular, $\trFrob{\cs{B}^{i+1}_m}$ is constant on $(G_r^{i}\times_{G_r} J_r^{i+1})(\Fq)$, taking the value $\dim \orho^{i+1}$.
With reference to the morphism $\pi_r^{i+1} : G_r^i \ltimes J_r^{i+1} \to G_r^{i+1}$ from \eqref{eq:pi}, define 
\[
\cs{C}_r^{i+1} \ceq (\pi_r^{i+1})_! (\cs{B}_r^{i+1}).
\]
Then $\cs{C}_r^{i+1}\in \mathsf{K}_{\bar\QQ}\mathsf{D}^b_{c,G_r^{i+1}}(G_r^{i+1};\EE)$ and
\begin{eqnarray*}
\trFrob{\cs{C}^{i+1}_r}(x)
&=& \sum_{y\in (\pi_r^{i+1})^{-1}(x)}  \trFrob{\cs{B}_r^{i+1}}(y).
\end{eqnarray*}
Since $\trFrob{\cs{B}^{i+1}_r}$ is constant on $(G_r^{i}\times_{G_r} J_r^{i+1})(\Fq)$, it follows that 
\[
\trFrob{\cs{C}^{i+1}_r} = n \trFrob{\cs{B}^{i+1}_r}
\]
on $G_r^{i+1}(\Fq)$ with $n$ equal to the product of $\# (G_r^{i}\times_{G_r} J_r^{i+1})(\Fq)$ and $\dim \orho^{i+1}$.
Let $\cs{A}_r^{i+1}$ be the element of $\mathsf{K}_{\bar\QQ}\mathsf{D}^b_{c,G_r^{i+1}}(G_r^{i+1};\EE)$ given by $\cs{A}_r^{i+1} = \frac{1}{n} \cs{C}_r^{i+1}$. 
This completes the inductive definition of $\cs{A}_r^i$ so that the following diagram commutes.
\[
\begin{tikzcd}
G^{i+1}(\Fq) \arrow{rr}{\trace(\orho^{i+1})} \arrow{rd} && \EE\\
& G_r^{i+1}(\Fq)\arrow{ru}[swap]{{\trFrob{\cs{A}_r^{i+1}}} } & 
\end{tikzcd}
\]

Now set $\cs{F}^i_r = \cs{A}_r^{i} \otimes \cs{L}_r^i$, for $i=0, \ldots ,d$.
Then $\cs{F}^i_r \in \mathsf{K}_{\bar\QQ}\mathsf{D}^b_{c,G_r^{i}}(G_r^{i};\EE)$ such that
\[
\begin{tikzcd}
G^i(\Fq) \arrow{rr}{\trace(\orho_i)} \arrow{rd} && \EE\\
& G_r(\Fq) \arrow{ru}[swap]{\trFrob{\cs{F}^i_r}} & 
\end{tikzcd}
\]
commutes.
Define $\cs{F}^i\in \mathsf{K}_{\bar\QQ}\mathsf{D}^b_{c,G^{i}}(G^{i};\EE)$ by pulling back $\cs{F}_r^i$ along $G^i \to G^i_r$.
Then 
\[
\trFrob{\cs{F}^i} = \trace(\orho_i),
\] 
as desired.
\end{proof}

\bigskip

\begin{bibdiv}
\begin{biblist}

\bib{berndt-schmidt:98a}{book}{
      author={Berndt, Rolf},
      author={Schmidt, Ralf},
       title={{Elements of the representation theory of the Jacobi group}},
   publisher={{Birkh\"auser/Springer Basel AG}},
     address={Basel, Switzerland},
        date={2011},
}

\bib{bernstein-lunts:equivariant}{book}{
   author={Bernstein, Joseph},
   author={Lunts, Valery},
   title={Equivariant sheaves and functors},
   series={Lecture Notes in Mathematics},
   volume={1578},
   publisher={Springer-Verlag, Berlin},
   date={1994},
   pages={iv+139},
   isbn={3-540-58071-9},
}

\bib{bertapelle-gonzales:Greenberg}{article}{
   author={Bertapelle, Alessandra},
   author={Gonz\'{a}lez-Avil\'{e}s, Cristian D.},
   title={The Greenberg functor revisited},
   journal={Eur. J. Math.},
   volume={4},
   date={2018},
   number={4},
   pages={1340--1389},
   issn={2199-675X},
}

\bib{bushnell-kutzko:98a}{article}{
      author={Bushnell, Colin},
      author={Kutzko, Phil},
       title={{Smooth representations of reductive $p$-adic groups: structure
  theory via types}},
        date={1998},
     journal={Proc. London Math. Soc.},
      volume={77},
      number={3},
       pages={{582\ndash 634}},
}

\bib{cunningham-roe:13a}{article}{
   author={Cunningham, Clifton},
   author={Roe, David},
   title={From the function-sheaf dictionary to quasicharacters of $p$-adic
   tori},
   journal={J. Inst. Math. Jussieu},
   volume={17},
   date={2018},
   number={1},
   pages={1--37},
   issn={1474-7480},
}

\bib{deligne:SGA4.5}{book}{
	address = {Berlin},
	author = {Pierre Deligne},
	publisher = {{Springer-Verlag}},
	series = {Lecture Notes in Mathematics},
	title = {{Cohomologie \'etale}},
	volume = {569},
	year = {1977},
}

\bib{Deligne:Weil}{article}{
   author={Deligne, Pierre},
   title={La conjecture de Weil. II},
   journal={Inst. Hautes \'Etudes Sci. Publ. Math.},
   number={52},
   date={1980},
   pages={137--252},
}

\bib{demazure:SGA3-VIA}{inproceedings}{
      author={Gabriel, Pierre},
       title={{Expos\'e VI$_\mathrm{A}$: G\'en\'eralit\'es sur les groupes
  alg\'ebriques}},
        date={1970},
   booktitle={S\'eminaire de G\'eom\'etrie Alg\'ebrique du Bois Marie, Sch\'emas en groupes I (SGA3) {Proprietes generales des schemas en
  groupes}},
      editor={Demazure, Michel},
      editor={Grothendieck, Alexander},
      series={Lecture Notes in Math},
      volume={151},
   publisher={Springer-Verlag},
     address={Berlin},
}

\bib{demazure:SGA3-VIB}{inproceedings}{
      author={Bertin, J.E.},
       title={{Expos\'e VI$_\mathrm{B}$: G\'en\'eralit\'es sur les pr\'esch\'emas en groupes}},
        date={1970},
   booktitle={S\'eminaire de G\'eom\'etrie Alg\'ebrique du Bois Marie, Sch\'emas en groupes I (SGA3) {Proprietes generales des schemas en
  groupes}},
      editor={Demazure, Michel},
      editor={Grothendieck, Alexander},
      series={Lecture Notes in Math},
      volume={151},
   publisher={Springer-Verlag},
     address={Berlin},
}

\bib{EGAIV4}{article}{
      author={Grothendieck, Alexandre},
       title={{\'El\'ements de g\'eom\'etrie alg\'ebrique IV. \'Etude locale
  des sch\'emas et des morphismes de sch\'emas. IV}},
        date={1967},
     journal={Inst. Hautes \'Etudes Sci. Publ. Math.},
      number={32},
}

\bib{gurevich-hadani:07a}{article}{
   author={Gurevich, Shamgar},
   author={Hadani, Ronny},
   title={The geometric Weil representation},
   journal={Selecta Math. (N.S.)},
   volume={13},
   date={2007},
   number={3},
   pages={465--481},
}

\bib{gurevich-hadani:11a}{unpublished}{
   author={Gurevich, Shamgar},
   author={Hadani, Ronny},
   title={The categorical Weil representation},
	date={2011},
        note={\href{http://arxiv.org/abs/1108.0351}{arXiv:1108.0351 [math.RT]}}
}

\bib{howe:73a}{article}{
   author={Howe, Roger E.},
   title={On the character of Weil's representation},
   journal={Trans. Amer. Math. Soc.},
   volume={177},
   date={1973},
   pages={287--298},
}

\bib{kamgarpour:09a}{article}{
      author={Kamgarpour, Masoud},
       title={{Stacky abelianization of algebraic groups}},
        date={2009},
     journal={{Transform. Groups}},
      volume={14},
      number={4},
       pages={825\ndash 846},
}

\bib{kim:07a}{article}{
      author={Kim, Ju-Lee},
       title={Supercuspidal representations: an exhaustion theorem},
        date={2007},
     journal={J. Amer. Math. Soc.},
      volume={20},
      number={2},
       pages={273\ndash 320},
}

\bib{lusztig:85a}{article}{
      author={Lusztig, George},
       title={{Character sheaves I}},
        date={1985},
     journal={Advances in Math.},
      volume={56},
       pages={193\ndash 237},
}

\bib{lusztig:86a}{article}{
	Author = {George Lusztig},
	Journal = {Advances in Math.},
	Number = {2},
	Pages = {{103--155}},
	Title = {{Character sheaves V}},
	Volume = {61},
	Year = {1986}}

\bib{lusztig:disconnected1}{article}{
      author={Lusztig, George},
       title={{Character sheaves on disconnected groups I}},
        date={2003},
     journal={Representation Theory},
      volume={7},
       pages={374\ndash 403},
}

\bib{Vistoli:notes}{article}{
   author={Vistoli, Angelo},
   title={Grothendieck topologies, fibered categories and descent theory},
   conference={
      title={Fundamental algebraic geometry},
   },
   book={
      series={Math. Surveys Monogr.},
      volume={123},
      publisher={Amer. Math. Soc., Providence, RI},
   },
   date={2005},
   pages={1--104},
}

\bib{yu:01a}{article}{
      author={Yu, Jiu-Kang},
       title={Construction of tame supercuspidal representations},
        date={2001},
     journal={J. Amer. Math. Soc.},
      volume={14},
      number={3},
       pages={{579\ndash 622}},
}

\bib{Yu:models}{inproceedings}{
author={Yu, Jiu-Kang},
title={{Smooth models associated to concave functions in Bruhat-Tits theory}},
booktitle={{Autour des sch\'emas en groupes - \'Ecole d'\'et\'e franco-asiatique de g\'eom\'etrie alg\'ebrique et de th\'eorie des nombres. Volume III}},
      series={Panoramas et synth\`eses},
      volume={47},
    publisher={Soci\'et\'e Math\'ematique de France},
     issn={1272-3835},
     note={\href{http://smf4.emath.fr/Publications/PanoramasSyntheses/2016/47/html/smf_pano-synth_47_227-258.php}{Panoramas et synth\`eses \textbf{47} (2015), 227--258}}

}

\end{biblist}
\end{bibdiv}

\end{document}